% interacttfssample.tex
% v1.05 - August 2017

\documentclass[]{interact}

\usepackage{epstopdf}% To incorporate .eps illustrations using PDFLaTeX, etc.
\usepackage[caption=false]{subfig}% Support for small, `sub' figures and tables

\usepackage{color}
\usepackage[numbers,sort&compress]{natbib}% Citation support using natbib.sty
\usepackage{hyperref}
\usepackage[ruled,linesnumbered]{algorithm2e}
\bibpunct[, ]{[}{]}{,}{n}{,}{,}% Citation support using natbib.sty
% Bibliography support using natbib.sty

\theoremstyle{plain}% Theorem-like structures provided by amsthm.sty
\newtheorem{theorem}{Theorem}[section]
\newtheorem{lemma}[theorem]{Lemma}

\theoremstyle{definition}
\newtheorem{definition}[theorem]{Definition}
\newtheorem{example}[theorem]{Example}

\theoremstyle{remark}
\newtheorem{remark}{Remark}

\begin{document}

%\articletype{ARTICLE TEMPLATE}% Specify the article type or omit as appropriate

\title{An implementable descent method for nonsmooth multiobjective optimization on Riemannian manifolds}

\author{
\name{Chunming Tang\textsuperscript{a}, Hao He\textsuperscript{a}, Jinbao Jian\textsuperscript{b}\thanks{CONTACT J. B. Jian. Email: jianjb@gxu.edu.cn} and Miantao Chao\textsuperscript{a}}
\affil{\textsuperscript{a}College of Mathematics and Information Science, Center for Applied Mathematics of Guangxi, Guangxi University, Nanning, 530004, P. R. China; \textsuperscript{b}College of Mathematics and Physics; Guangxi Key Laboratory of Hybrid Computation and IC Design Analysis, Center for Applied Mathematics and Artificial Intelligence, Guangxi Minzu University, Nanning 530006, P. R. China}
}

\maketitle

\begin{abstract}
In this paper, an implementable descent method for nonsmooth multiobjective optimization problems on complete Riemannian manifolds is proposed. The objective functions are only assumed to be locally Lipschitz continuous instead of convexity used in the existing subgradient method for Riemannian multiobjective optimization. 
And the constraint manifold is a general manifold rather than some specific manifolds used in the proximal point method. 
The retraction mapping is introduced to avoid the use of computationally difficult geodesic.
A Riemannian version of the necessary condition for Pareto optimality
is proposed, which generalized the classical one in Euclidean space to the manifold setting. 
At every iteration, an acceptable descent direction is obtained by constructing a convex hull of some Riemannian $\varepsilon$-subgradients. And then a Riemannian Armijo-type line search is executed to produce the next iterate.
The convergence result is established in the sense that 
a point satisfying the necessary condition for Pareto optimality
can be generated by the algorithm in a finite number of iterations.
Finally, some preliminary numerical results are reported, which show that the proposed method is efficient.
\end{abstract}

\begin{keywords}
Multiobjective optimization; Riemannian manifolds; Descent method; Pareto optimality; Convergence analysis
\end{keywords}

\begin{amscode}
65K05; 90C30
\end{amscode}

\section{Introduction}

In the field of optimization, minimizing multiple objective functions at the same time is called multiobjective optimization. Usually, these objective functions are conflicting with each other, instead of having some common minimum points. For example, producers want to create higher value and make the cost as low as possible in production and manufacturing. Similar problems arise in many applications such as engineering design \cite{Engau}, management science \cite{Uttarayan Bagchi,M. Gravel},  environmental analysis \cite{Leschine,Fliege1}, etc. Due to its wide practical applications, multiobjective optimization has always been a hot topic, and a rich literature was produced; see the monographs \cite{Abraham,Deb,Sawaragi} and the references therein.

In most cases, the solution of multiobjective optimization problem is not a single point but a set of all optimal compromises, namely, the Pareto set. In traditional multiobjective optimization, one of the most popular methods is the scalarization approach \cite{Geoffrion}, whose idea is to convert a multiobjective problem to a single or a family of single objective optimization problems. However, in this method, the users need to select some necessary parameters because they are not known in advance, which may bring an additional cost. To overcome this shortcoming, there have other methods to solve such optimization problems, such as descent methods \cite{Fliege Steepest,El,Gebken B}, Newton-type methods \cite{Fliege Newton,Povalej Ž}, proximal point methods \cite{H. Bonnel,Bento6} and proximal bundle methods \cite{Makela1}, etc. These methods are almost all developed from a single objective optimization. In this paper, we are particularly interested in the case where the objective functions are not necessarily differentiable or convex. Recently, Gebken and Peitz \cite{Gebken B} proposed a descent method for locally Lipschitz multiobjective optimization, 
in which an acceptable descent direction for all objectives is selected as the element 
which has the smallest norm in the negative convex hull of certain subgradients of the objective functions.

In recent years, many traditional single objective optimization theories and methods have been extended from Euclidean space to Riemannian manifolds; see, e.g.,  \cite{P. A. Absil,Boumal,Hosseini1,Huang W,Hu,Hoseini,Sato,Zhu X}. Comparatively, for Riemannian multiobjective optimization, the relevant literature is very scarce, especially for nonsmooth cases. In \cite{Bento1} and \cite{Bento2}, a steepest descent method and an inexact version with Armijo rule for multiobjective optimization in the Riemannian context are presented, respectivley. Both methods require the objective functions to be continuously differentiable for partial convergence, and further assume that the objective vector function is quasi-convex and the manifold has nonnegative curvature for full convergence. In \cite{Bento3}, a proximal point method for nonsmooth multiobjective optimization on Hadamard manifold is developed. 
However, the application of this method is somewhat restricted, since as an important class of manifolds, 
the Stiefel manifold is not a Hadamard manifold.
In \cite{Bento4}, a subgradient-type method for Riemannian nonsmooth multiobjective optimization is presented, which requires the objective vector function to be convex. In addition, the geodesic needs to be generated at each iteration, which is generally a computationally daunting task, since it amounts to solving a second-order ordinary differential equation. 
In \cite{Eslami N}, a trust region method for Riemannian smooth multiobjective optimization problems is proposed, in which the objective functions are 
required to be second-order continuously differentiable for proving global convergence. As far as we know, numerical results are not reported in the existing literature for Riemannian nonsmooth multiobjective optimization.

Based on the above observations, the aim of this paper is to develop a 
practically and efficiently implementable method for nonconvex nonsmooth multiobjective optimization problems on general Riemannian manifolds.
More precisely, we propose a descent method for locally Lipschitz multiobjective optimization problems on complete Riemannian manifolds.
To the best of our knowledge, ours is the first method which is implementable and only requires that the objective functions are locally Lipschitz continuous on general manifolds. In particular, if the manifold constraint is dropped, the proposed method reduces to the method of \cite{Gebken B} in Euclidean space. However, our method is definitely not a straightforward extension of that in \cite{Gebken B}, since there are a lot of difficulties to be overcome. On one hand, we need to generalize the classical necessary condition of the Pareto optimal points for nonsmooth multiobjective optimization (see \cite{Makela2}) to the Riemannian setting. 
For this purpose, by combining with the retraction mapping, we transform the objective functions into the tangent space locally instead of considering them directly, {%\color{blue} 
	since some techniques and tools required for theoretical analysis are different from those in Euclidean space. For example, the Clarke generalized directional derivative in Euclidean space cannot be directly applied on manifolds. In fact, it is defined in the tangent space by resorting to a pullback function; see, e.g., \cite{Bentofo,Hosseini2}.}
On the other hand, the vector transport is introduced to handle linear combinations of subgradients in different tangent spaces.
And we need to carefully select the vector transport, aiming to ensure global convergence of the algorithm. 
More precisely, it should be satisfied the isometric property and the locking condition \cite{Huang W}. 
Based on such a vector transport, the Riemannian $\varepsilon$-subdifferential is then defined \cite{Hosseini2}.
We show that there exists a common descent direction for each objective function, which is just the element with the smallest norm in the set consisting of the negative convex hull of the Riemannian $\varepsilon$-subdifferentials of all objective functions.
Of course, it is generally not easy to compute the $\varepsilon$-subdifferentials of a nonsmooth function especially when its domain is a manifold. In order to save computational effort, inspired by the strategy adopted in the traditional methods \cite{Mahdavi-Amiri,Gebken B}, we use the convex hull of a special set to approximate the convex hull of Riemannian $\varepsilon$-subdifferentials of all objective functions. For this set, at the beginning, it consists of a single $\varepsilon$-subgradient of each objective function, then some new $\varepsilon$-subgradients are systematically computed and added to enrich the set, until the element with the smallest norm in its convex hull is an acceptable direction for each objective function. Furthermore, a Riemannian Armijo-type line search is executed to produce the next iterate.
Compared with \cite{Bento4}, by making use of the retraction mapping, the computation of the geodesic is avoided. The convergence result is established in the sense that an $(\varepsilon,\delta)$-critical point which is an approximation of the Pareto optimal point can be generated in a finite number of iterations. Finally, some preliminary numerical results are reported, which show that the proposed method is efficient.

This paper is organized as follows. In section \ref{sec2}, we recall some basic notations and definitions regarding Riemannian manifolds and locally Lipschitz function. In section \ref{sec3}, the necessary condition for Pareto optimality regarding locally Lipschitz multiobjective optimization problems is generalized to Riemannian manifolds, and the details of our method is presented. In section \ref{sec4}, we establish the convergence result of our method. In section \ref{sec5}, some preliminary numerical experiments are given.

\section{Preliminaries}\label{sec2}
Throughout of the paper, we denote by cl$S$ and conv$S$ the closure and the convex hull of a set $S$, respectively.
Letting $\mathcal{M}$ be a complete $d$-dimensional ($d\geq 1$) smooth manifold endowed with a Riemannian metric $\langle\cdot,\cdot\rangle_x$ on the tangent space $T_x\mathcal{M}$, we denote by $\|\cdot\|_x$ the norm which induced by Riemannian metric. We will often omit subscripts when they do not cause confusion and simply write $\langle\cdot,\cdot\rangle$ and $\|\cdot\|$ to $\langle\cdot,\cdot\rangle_x$ and  $\|\cdot\|_x$, respectively. The Riemannian distance from $x$ to $y$ is denoted by dist($x$,$y$), where the points $x,y\in\mathcal{M}$. Denote $\mathbb{B}(x,\sigma)=\{y\in\mathcal{M}|\,\,{\rm dist}(x,y)<\sigma\}$, and the tangent bundle by $T\mathcal{M}$. 

Firstly, we introduce the definition of locally Lipschitz functions on Riemannian manifolds; see, e.g., \cite{Hosseini1}.

\begin{definition}\label{def2.1}
	{%\color{red} 
		Let $x\in\mathcal{M}$. If there are constant $L>0$ and a neighborhood $\mathcal{N}\subset\mathcal{M}$ of $x$ such that $f:\mathcal{M}\rightarrow \mathbb{R}$ satisfies} 
 \begin{equation*}
    {%\color{red} 
    	|f(z)-f(y)|\leq L{\rm dist}(z,y),\,\, {\rm for} \,\, {\rm all} \,\,z, y\in \mathcal{N},}
 \end{equation*} 
 we say that $f$ is Lipschitz continuous near $x$ with the constant $L$. Furthermore, if for all $x\in\mathcal{M}$, $f$ is Lipschitz continuous near $x$, then we say that $f$ is a locally Lipschitz (continuous) function on $\mathcal{M}$.

	%A function $f:\mathcal{M}\rightarrow \mathbb{R}$ is said to satisfy a Lipschitz condition of constant $L$ on a given subset $S$ of $\mathcal{M}$ if
	%$$|f(x)-f(y)|\leq L{\rm dist}(x,y),\,\, {\rm for} \,\, {\rm all} \,\,x,y\in S.$$
	%$f$ is said to be Lipschitz near $x\in\mathcal{M}$ if it satisfies the Lipschitz condition of some constant on a neighborhood of $x$. A function $f$ is said to be locally Lipschitz on $\mathcal{M}$ if $f$ is Lipschitz near $x$ for every $x\in\mathcal{M}$.
\end{definition}

Now we consider the Riemannian nonsmooth multiobjective optimization problem:
\begin{eqnarray}\label{pro1}
	\min_{x\in\mathcal{M}}F(x):=%\min_{x\in\mathcal{M}}
	\left (
	\begin{array}{ll}
		f_1(x)\\
		\,\,\,\,\,\,\vdots\\
		f_m(x)
	\end{array}
	\right ),
\end{eqnarray}
where $F:\mathcal{M}\rightarrow \mathbb{R}^m$ is called objective vector function, and the components $f_i:\mathcal{M}\rightarrow \mathbb{R}$ for $i\in\{1,\cdots,m\}$ are called objective functions, which are assumed to be locally Lipschitz continuous on $\mathcal{M}$. Clearly, the concept of optimality for real-valued function no longer applies, since the objective function of problem (\ref{pro1}) is vector valued. So we introduce the following so-called Pareto optimality (see \cite[Ch. 2]{Ehrgott}), and 
our aim is to find (approximate) Pareto optimal points on manifolds.

\begin{definition}\label{def2.2}
	Let $x\in\mathcal{M}$. If there is no $y\in\mathcal{M}$ such that
	$$f_i(y)\leq f_i(x)\,\,\,\,\forall i\in\{1,\cdots,m\}\,\,{\rm and}\,\,f_j(y)< f_j(x)\,\,\,\,{\rm for\,\,some}\,\,j\in\{1,\cdots,m\},$$
	then we say that $x$ is a Pareto optimal point for the problem (\ref{pro1}). Pareto set is the set consisting of all Pareto optimal points.

	%A point $x\in\mathcal{M}$ is called Pareto optimal for the problem (\ref{pro1}), if there is no $y\in\mathcal{M}$ such that
	%$$f_i(y)\leq f_i(x)\,\,\,\,\forall i\in\{1,\cdots,m\}\,\,{\rm and}\,\,f_j(y)< f_j(x)\,\,\,\,{\rm for\,\,some}\,\,j\in\{1,\cdots,m\}.$$
	%The set of all Pareto optimal points is called Pareto set.
\end{definition}

For optimization problems posed on nonlinear manifolds, the concept of retraction can help us to develop a theory which is similar to line search methods in $\mathbb{R}^n$; see \cite[Def. 4.1.1]{P. A. Absil}.

\begin{definition}\label{def2.3}
	A smooth mapping $R:T\mathcal{M}\rightarrow \mathcal{M}$ is called a retraction on a manifold $\mathcal{M}$ if it has the following properties:
	
	(i) $R_x(0_x)=x$, where $0_x$ denotes the zero element of $T_x\mathcal{M}$;
	
	(ii) with the canonical identification $T_{0_x}T_x\mathcal{M}\simeq T_x\mathcal{M}$, $R_x$ satisfies
	$${\rm D}R_x(0_x)={\rm id}_{T_x\mathcal{M}},$$
	where $R_x$ is the restriction of $R$ to $T_x\mathcal{M}$, and
	${\rm id}_{T_x\mathcal{M}}$ denotes the identity mapping on $T_x\mathcal{M}$.
\end{definition}

It is further assumed that there is a constant $\kappa$ such that
\begin{equation}\label{eq2.1}
	{\rm dist}(R_x(\xi_x),x)\leq \kappa\|\xi_x\|
\end{equation}
for all $x\in \mathcal{M}$ and $\xi_x\in T_x\mathcal{M}$. {%\color{blue} 
	Intuitively, the inequality (\ref{eq2.1}) implies that the distance between $x$ and $R_x(\xi_x)$ is controlled by $\|\xi_x\|$}. Furthermore, it can ensure that the point $R_x(\xi_x)$ is in the neighborhood of $x$ when $\|\xi_x\|$ is small enough. This does not constitute any restriction in most cases of interest; see \cite{Hosseini1}.
Denote $B_R(x,r)=\{R_x(\eta_x)|\,\,\|\eta_x\|<r\}$, which is an open ball centered at $x$ with radius $r$ if the retraction $R$ is the exponential mapping; see \cite{Hosseini2}.

Since we will work in different tangent spaces, it is necessary to introduce the concept of vector transport (see \cite[Def. 8.1.1]{P. A. Absil}). As its name implies, it serves to move vectors in different tangent spaces to the same tangent space. Particularly, parallel translation along geodesics is a vector transport.

\begin{definition}\label{def2.4}
	A smooth mapping $\mathcal{T}:T\mathcal{M}\oplus T\mathcal{M}\rightarrow T\mathcal{M}:(\eta_x,\xi_x)\mapsto\mathcal{T}_{\eta_x}(\xi_x)$ is said to be a vector transport associated to a retraction $R$ if for all $(\eta_x,\xi_x)$, the following conditions hold:
	
	%A vector transport associated to a retraction $R$ is defined as a smooth mapping $\mathcal{T}:T\mathcal{M}\oplus T\mathcal{M}\rightarrow T\mathcal{M}:(\eta_x,\xi_x)\mapsto\mathcal{T}_{\eta_x}(\xi_x)$, which for all $(\eta_x,\xi_x)$ satisfies the following conditions:
	
	(i) $\mathcal{T}_{\eta_x}:T_x\mathcal{M}\rightarrow T_{R_x(\eta_x)}\mathcal{M}$ is a linear map;
	
	(ii) $\mathcal{T}_{0_x}(\xi_x)=\xi_x$ for all $\xi_x\in T_x\mathcal{M}$.
\end{definition}

Briefly, if $\xi_x\in T_x\mathcal{M}$ and $R(\eta_x)=y$, then $\mathcal{T}_{\eta_x}$ transports vector $\xi_x$ from the tangent space of $\mathcal{M}$ at $x$ to the tangent space at $y$. In order to obtain convergence results, the following conditions are also required.

\begin{description}
	\item[$\bullet$] The vector transport $\mathcal{T}$ preserves inner products, i.e.,
	\begin{equation}\label{inner}
		\langle\mathcal{T}_{\eta_x}(\xi_x),\mathcal{T}_{\eta_x}(\zeta_x)\rangle=\langle\xi_x,\zeta_x\rangle.
	\end{equation}

	\item[$\bullet$] The following locking condition is satisfied for $\mathcal{T}$, i.e.,
	%vector transport $\mathcal{T}$ should satisfy the following condition, called locking condition, for transporting vectors along their own direction:
	\begin{equation}\label{locking}
		\mathcal{T}_{\xi_x}(\xi_x)=\beta_{\xi_x}\mathcal{T}_{R_{\xi_x}}(\xi_x),\quad \beta_{\xi_x}=\frac{\|\xi_x\|}{\|\mathcal{T}_{R_{\xi_x}}(\xi_x)\|},
	\end{equation}
	where$$\mathcal{T}_{R_{\eta_x}}(\xi_x)={\rm D}R_x(\eta_x)[\xi_x]=\frac{\rm d}{{\rm d}t}R_x(\eta_x+t\xi_x)|_{t=0}.$$
\end{description}

The above conditions are satisfied with $\beta_{\xi_x}=1$ if the retraction and vector transport are selected as the exponential map and parallel transport, respectively; see \cite{Huang W} for more details. 

In addition, it is necessary to introduce the notion of injectivity radius for $R_x$, since we need to transport subgradients from tangent spaces at some points lying in the neighborhood of $x\in\mathcal{M}$ to the tangent space at $x$; see \cite{Hosseini2}.

\begin{definition}\label{def2.5}
	{%\color{blue}
		Let
	$$\iota(x):={\rm sup}\{\varepsilon>0\,| \,\, R_x:B(0_x,\varepsilon)\rightarrow B_R(x,\varepsilon) {\rm \,\, is\,\, injective}\},$$
	where $B(0_x,\varepsilon)=\{\xi_x\,|\,\,\|\xi_x\|<\varepsilon\}\subset T_x\mathcal{M}$. Furthermore, for this retraction $R$, the injectivity radius of $\mathcal{M}$ is defined as
	$$\iota(\mathcal{M}):=\inf_{x\in\mathcal{M}}\iota(x).$$
 }
\end{definition}
%This definition is consistent with the usual definition when exponential mapping is used as a retraction.

{%\color{blue}
	For simplicity, 
the following intuitive notations are used:
$$\mathcal{T}_{x\rightarrow y}(\xi_x):=\mathcal{T}_{\eta_x}(\xi_x),\,\, {\rm and}\,\,\mathcal{T}_{x\leftarrow y}(\xi_x):=(\mathcal{T}_{\eta_x})^{-1}(\xi_y)\,\, {\rm whenever}\,\, y=R_x(\eta_x).$$
}

\begin{remark}\label{re1}
	(i) As in usual we assume that $\iota(\mathcal{M})>0$, and that an explicit positive lower bound of $\iota(\mathcal{M})$ is available, which will be used as an input of the algorithm. In fact, when $\mathcal{M}$ is compact, we at least know that $\iota(\mathcal{M})>0$.
	(ii) Clearly, $\mathcal{T}_{x\leftarrow y}(\xi_x)$ is well defined for all $y\in B_R(x,\iota(x))$, especially, for all $y\in B_R(x,\iota(\mathcal{M}))$. In what follows, it will always be ensured that $\mathcal{T}_{x\leftarrow y}(\xi_x)$ is well defined when we use it.
\end{remark}

We close this section by recalling the notion of Riemannian subdifferential, which is an extension of the classical Clarke subdifferential; see \cite{Hosseini2}.
If $X$ is a Hilbert space, and $\phi$ is a locally Lipschitz function defined from $X$ to $\mathbb{R}$, the Clarke generalized directional derivative of $\phi$ at $x$ in direction $v$ is defined as
$$\phi^\circ (x;v)=\limsup_{y\rightarrow x,\,\,t\downarrow 0}\frac{\phi(y+tv)-\phi(y)}{t},$$
and the Clarke subdifferential of $\phi$ at $x$ is then given by
$$\partial \phi(x):=\{\xi\in X\mid\langle \xi,v\rangle\leq \phi^\circ (x;v)\,\,{\rm for}\,\,{\rm all}\,\,v\in X\}.$$

\begin{definition}\label{def2.6}
	Let $f:\mathcal{M}\rightarrow \mathbb{R}$ be a locally Lipschitz function and for $x\in \mathcal{M}$, denote the pullback $\hat{f}_x$ of $f$ through $R$ at $x$ by $f\circ R_x$ (i.e., $\hat{f}_x=f\circ R_x$), where $R$ is a retraction. {The  Clarke generalized directional derivative of $f$ at $x$ in direction $p\in T_x\mathcal{M}$ is defined as} 
	$$f^\circ (x;p)=\hat{f}^\circ _x(0_x,p),$$ 
	where $\hat{f}^\circ _x(0_x,p)$ is the Clarke generalized directional derivative of $\hat{f}_x:T_x\mathcal{M}\rightarrow \mathbb{R}$ at $0_x$ in direction $p\in T_x\mathcal{M}$. Then the Riemannian subdifferential of $f$ at $x$ is defined as
	$$\partial f(x)=\partial\hat{f}_x(0_x).$$
\end{definition}

The following lemma shows some important properties of the Riemannian subdifferential, which are 
similar to those of the Clarke subdifferential in Hilbert space; see \cite[Thm. 2.2]{Hosseini2}.

\begin{lemma}\label{le2.1}
	Let $f:\mathcal{M}\rightarrow \mathbb{R}$ be a locally Lipschitz function, then the set $\partial f(x)$ is a nonempty, compact and convex subset of $T_x\mathcal{M}$, and $\|\xi\|\leq L$ for all $\xi\in \partial f(x)$, where $L$ is the Lipschitz constant near $x$.
\end{lemma}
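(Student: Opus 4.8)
The plan is to reduce the statement entirely to the corresponding facts about the Clarke subdifferential in a Hilbert space, since by Definition~\ref{def2.6} we have $\partial f(x) = \partial \hat f_x(0_x)$, where $\hat f_x = f\circ R_x$ is a function defined on the finite-dimensional (hence Hilbert) tangent space $T_x\mathcal{M}$. So the first step is to check that $\hat f_x$ is Lipschitz continuous near $0_x\in T_x\mathcal{M}$, with a Lipschitz constant controlled by the Lipschitz constant $L$ of $f$ near $x$. This is where the retraction hypothesis \eqref{eq2.1} enters: for $\xi_x,\eta_x$ small we have $|\hat f_x(\xi_x)-\hat f_x(\eta_x)| = |f(R_x(\xi_x)) - f(R_x(\eta_x))| \le L\,{\rm dist}(R_x(\xi_x),R_x(\eta_x))$, and one needs a bound of the form ${\rm dist}(R_x(\xi_x),R_x(\eta_x)) \le C\|\xi_x-\eta_x\|$ for $\xi_x,\eta_x$ in a small ball. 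Near $0_x$ this follows from $R_x$ being smooth with ${\rm D}R_x(0_x)={\rm id}$ (Definition~\ref{def2.3}(ii)), which makes $R_x$ bi-Lipschitz on a neighborhood of $0_x$ with constant arbitrarily close to $1$; combined with \eqref{eq2.1} this gives that $\hat f_x$ is locally Lipschitz near $0_x$ with a constant that can be taken as close to $L$ as we like (in particular $\le L$ in the limit, or one simply works with the infimum of admissible constants).

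The second step is to invoke the classical properties of the Clarke subdifferential of a locally Lipschitz function $\phi$ on a Hilbert space $X$: $\partial\phi(x)$ is nonempty, convex, weak$^*$-compact (hence norm-compact in finite dimensions), and every $\xi\in\partial\phi(x)$ satisfies $\|\xi\|\le \mathrm{Lip}(\phi)$. These are standard (e.g.\ Clarke's monograph). Nonemptiness and convexity follow because $\phi^\circ(x;\cdot)$ is a finite, positively homogeneous, subadditive function of $v$, so $\partial\phi(x)$ is the (nonempty) subdifferential at $0$ of this sublinear functional; the norm bound follows from $|\phi^\circ(x;v)|\le \mathrm{Lip}(\phi)\|v\|$, which forces $\langle\xi,v\rangle\le \mathrm{Lip}(\phi)\|v\|$ for all $v$ and hence $\|\xi\|\le \mathrm{Lip}(\phi)$; compactness follows from closedness (the defining inequalities are closed) plus boundedness in finite dimension. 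Applying this to $\phi=\hat f_x$ at the point $0_x$ yields that $\partial\hat f_x(0_x)$ is nonempty, compact, convex in $T_x\mathcal{M}$, with $\|\xi\|\le \mathrm{Lip}(\hat f_x)$ for every element $\xi$.

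Finally, one transfers these conclusions back: $\partial f(x)=\partial\hat f_x(0_x)\subset T_{0_x}T_x\mathcal{M}\simeq T_x\mathcal{M}$ inherits nonemptiness, compactness and convexity, and the norm bound reads $\|\xi\|\le \mathrm{Lip}(\hat f_x)$. It remains to argue that $\mathrm{Lip}(\hat f_x)$ can be taken to equal the Lipschitz constant $L$ of $f$ near $x$; this is exactly the content of Step~1 (in the sharpest form, one uses that the local Lipschitz modulus of $\hat f_x$ at $0_x$ is at most $L\cdot\|{\rm D}R_x(0_x)\| = L$). The main obstacle, and really the only non-bookkeeping point, is this quantitative comparison of Lipschitz constants under the retraction — i.e.\ making precise that \eqref{eq2.1} together with ${\rm D}R_x(0_x)={\rm id}$ lets one pass the constant $L$ through $R_x$ without loss. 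Everything else is a direct citation of the Hilbert-space Clarke calculus, exactly as indicated by the reference to \cite[Thm.~2.2]{Hosseini2}. A clean way to organize the write-up is therefore: (1) show $\hat f_x$ is Lipschitz near $0_x$ with constant $L$; (2) quote the Hilbert-space properties of $\partial\hat f_x(0_x)$; (3) conclude via Definition~\ref{def2.6}.
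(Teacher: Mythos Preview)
The paper does not actually prove Lemma~\ref{le2.1}; it simply cites \cite[Thm.~2.2]{Hosseini2}. Your plan is precisely the natural argument behind that cited result: by Definition~\ref{def2.6} one has $\partial f(x)=\partial\hat f_x(0_x)$ with $\hat f_x=f\circ R_x$ a locally Lipschitz function on the finite-dimensional Hilbert space $T_x\mathcal{M}$, so the classical Clarke theory (nonemptiness, convexity, compactness, and the bound $\|\xi\|\le\mathrm{Lip}$) applies directly. In substance your route and the paper's coincide; you have simply unpacked the citation.

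One small point on the write-up: you invoke \eqref{eq2.1} for the Lipschitz estimate on $\hat f_x$, but \eqref{eq2.1} only controls $\mathrm{dist}(R_x(\xi),x)$, not $\mathrm{dist}(R_x(\xi),R_x(\eta))$. The inequality you actually need, $\mathrm{dist}(R_x(\xi),R_x(\eta))\le (1+o(1))\|\xi-\eta\|$ for $\xi,\eta$ near $0_x$, follows---as you also say---from $R_x$ being smooth with $\mathrm{D}R_x(0_x)=\mathrm{id}_{T_x\mathcal{M}}$; assumption \eqref{eq2.1} is not what delivers the sharp constant $L$ here. With that adjustment your three-step outline (local Lipschitz constant of $\hat f_x$ equals $L$; quote Clarke's properties on $T_x\mathcal{M}$; transfer via Definition~\ref{def2.6}) is correct.
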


The Riemannian $\varepsilon$-subdifferential and $\varepsilon$-subgradient of $f$ can be also defined; see  \cite{Hosseini2}.
\begin{definition}\label{def2.7}
	Let $\varepsilon\in\left(0,\frac{1}{2}\iota(x)\right)$, then the Riemannian $\varepsilon$-subdifferential of a locally Lipschitz function $f$ on a Riemannian manifold $\mathcal{M}$ at $x$ is defined as

	%Let $f:\mathcal{M}\rightarrow \mathbb{R}$ be a locally Lipschitz function on a Riemannian manifold $\mathcal{M}$ and $\varepsilon\in\left(0,\frac{1}{2}\iota(x)\right)$.\footnote{Note that $y\in{\rm cl}B_R(x,\varepsilon)$, so the coefficient $\frac{1}{2}$ ensures that the inverse vector transports are well defined on the boundary of $B_R(x,\varepsilon)$.} The Riemannian $\varepsilon$-subdifferential of $f$ at $x$ is defined as
	%\begin{equation}\label{eq2.4}
	$$\partial_\varepsilon f(x):={\rm cl\,conv}\{\beta_\eta^{-1}\mathcal{T}_{x\leftarrow y}(\partial f(y)):\ y\in{\rm cl}B_R(x,\varepsilon)\,\, {\rm and}\,\, \eta=R_x^{-1}(y)\},$$
	%\end{equation}
	where $\beta_\eta=\frac{\|\eta\|}{\|\mathcal{T}_{R_{\eta}}(\eta)\|}$.
	Every element of $\partial_\varepsilon f(x)$ is called a (Riemannian) $\varepsilon$-subgradient.
\end{definition}

\begin{remark}
   { The coefficient $\frac{1}{2}$ of $\iota(x)$ in Definition \ref{def2.7} is to guarantee that the notation $\mathcal{T}_{x\leftarrow y}(\partial f(y))$ is well defined on the boundary of $B_R(x,\varepsilon)$.}
\end{remark}

\begin{lemma}\label{le2.2}
	The set $\partial_\varepsilon f(x)$ is a nonempty, compact and convex  subset of $T_x\mathcal{M}$.
\end{lemma}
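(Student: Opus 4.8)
The plan is to check the three asserted properties one at a time, in increasing order of difficulty. Convexity is immediate from the definition: $\partial_\varepsilon f(x)$ is by construction the closed convex hull of a subset of the vector space $T_x\mathcal{M}$, and the closed convex hull of any set is convex. Nonemptiness follows by isolating the point $y=x$ in the union defining $\partial_\varepsilon f(x)$: then $\eta=R_x^{-1}(x)=0_x$, so $\beta_{0_x}=1$ and $\mathcal{T}_{x\leftarrow x}$ is the identity map, whence $\partial f(x)$ is one of the generating sets; since $\partial f(x)\neq\emptyset$ by Lemma \ref{le2.1}, the generating set is nonempty, and so is its closed convex hull.

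The real content is compactness. Because $T_x\mathcal{M}$ is finite-dimensional, it suffices to prove that the generating set
\[
S:=\bigcup_{y\in{\rm cl}B_R(x,\varepsilon)}\beta_\eta^{-1}\mathcal{T}_{x\leftarrow y}(\partial f(y)),\qquad \eta=R_x^{-1}(y),
\]
is bounded; for then ${\rm conv}\,S$ is bounded (it lies in any closed ball containing $S$), hence ${\rm cl\,conv}\,S=\partial_\varepsilon f(x)$ is closed and bounded, i.e.\ compact by Heine--Borel. To bound $S$ I would argue in three steps. First, ${\rm cl}B_R(x,\varepsilon)$ is compact: the closed ball $\overline{B(0_x,\varepsilon)}\subset T_x\mathcal{M}$ is compact, $R_x$ is smooth hence continuous, so $R_x\bigl(\overline{B(0_x,\varepsilon)}\bigr)$ is compact, and ${\rm cl}B_R(x,\varepsilon)$ is a closed subset of it. Second, since $f$ is locally Lipschitz, covering the compact set ${\rm cl}B_R(x,\varepsilon)$ by finitely many Lipschitz neighborhoods and taking the maximum of the corresponding constants produces a single constant $L$ with $\|\xi\|\leq L$ for all $\xi\in\partial f(y)$ and all $y\in{\rm cl}B_R(x,\varepsilon)$, by Lemma \ref{le2.1}. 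Third, $\mathcal{T}_{x\leftarrow y}$ is the inverse of the inner-product--preserving linear map $\mathcal{T}_{\eta_x}$ (see \eqref{inner}), hence is itself a linear isometry, so $\|\mathcal{T}_{x\leftarrow y}(\xi)\|=\|\xi\|$; and $\beta_\eta^{-1}=\|{\rm D}R_x(\eta)[\eta]\|/\|\eta\|$ is bounded above on $\overline{B(0_x,\varepsilon)}$ because $\eta\mapsto{\rm D}R_x(\eta)$ is continuous on this compact set (with the removable value $1$ at $\eta=0_x$, consistent with Definition \ref{def2.3}(ii)). Combining the three bounds gives $\|\zeta\|\leq CL$ for every $\zeta\in S$, where $C$ is an upper bound for $\beta_\eta^{-1}$; hence $S$ is bounded.

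I expect the main obstacle to be this boundedness estimate for $S$, and within it the two uniformity claims: extracting a \emph{single} Lipschitz constant valid on the whole compact ball (rather than merely pointwise constants), and the uniform bound on $\beta_\eta^{-1}$, which needs $\eta\mapsto{\rm D}R_x(\eta)$ to be well defined and norm-bounded on $\overline{B(0_x,\varepsilon)}$ --- here the restriction $\varepsilon\in\bigl(0,\tfrac12\iota(x)\bigr)$ and the smoothness of the retraction enter, guaranteeing that $R_x^{-1}$, and therefore $\eta$, is well defined on ${\rm cl}B_R(x,\varepsilon)$ (cf.\ Remark \ref{re1}(ii)). Everything else --- convexity, nonemptiness, and the passage from a bounded generating set to a compact closed convex hull --- is routine finite-dimensional convex analysis.
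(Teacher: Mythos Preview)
Your proposal is correct. The paper's own proof is much shorter: it cites \cite[Thm.~2.15]{Hosseini2} directly for the boundedness of $\partial_\varepsilon f(x)$, and then simply appeals to Lemma~\ref{le2.1} and Definition~\ref{def2.7} for nonemptiness, closedness, and convexity. Your treatment of nonemptiness and convexity matches the paper's implicit reasoning; the genuine difference is that you \emph{prove} boundedness from scratch --- compactness of ${\rm cl}B_R(x,\varepsilon)$, a uniform Lipschitz constant via a finite subcover, the isometry property \eqref{inner} of the vector transport, and a continuity argument for $\beta_\eta^{-1}$ --- rather than importing it as a black box. Your route is more self-contained and makes transparent exactly where the hypotheses (smooth retraction, $\varepsilon<\tfrac12\iota(x)$, isometric transport) are used, at the cost of some length; the paper's route is expedient but leaves the reader to consult \cite{Hosseini2} for the substantive estimate.
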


\begin{proof}
	From \cite[Thm. 2.15]{Hosseini2} we know that the set $\partial_\varepsilon f(x)$ is bounded. This together with Lemma \ref{le2.1} and Definition \ref{def2.7} shows the claim.
%	\qed
\end{proof}

\section{An implementable descent method for Riemannian nonsmooth multiobjective optimization}\label{sec3}

In this section, we present the details of our algorithm, which contains two procedures that help us to find a descent direction. We first introduce the definition of global weak Pareto optimal points and local weak Pareto optimal points for problem (\ref{pro1}) as follows; see \cite[Ch. 2]{Ehrgott}.
\begin{definition}\label{def3.1}
	{Let $x\in\mathcal{M}$. If there is no $y\in\mathcal{M}$ such that $f_i(y)< f_i(x)$ for all $i=1,\cdots ,m$, then we say that $x$ is a weak Pareto optimizer of problem (\ref{pro1}). If there exists some $\sigma>0$ such that $x$ is a (weak) Pareto optimizer on $\mathbb{B}(x,\sigma)\cap \mathcal{M}$, then we say that $x$ is a local (weak) Pareto optimizer of problem (\ref{pro1}).
 }
	
	%A point $x\in\mathcal{M}$ is said to be a global weak Pareto optimal of problem (\ref{pro1}), if there is no $y\in\mathcal{M}$ such that $f_i(y)< f_i(x)$ for all $i=1,\cdots ,m$. A point $x\in \mathcal{M}$ is said to be a local weak Pareto optimal of problem (\ref{pro1}), if there exists a constant $\delta>0$ such that $x$ is a global weak Pareto optimal on $\mathbb{B}(x,\delta)\cap \mathcal{M}$.
\end{definition}

It is clear that if $x$ is a Pareto optimal point of problem (\ref{pro1}), then it is a global weak Pareto optimal point of problem (\ref{pro1}), so it also must be a local weak Pareto optimal point of problem (\ref{pro1}). 
Next, we generalize the necessary condition for Pareto optimality in Euclidean space (see \cite{Makela2}) to the Riemannian setting.

\begin{theorem}\label{le3.1}\label{th3.1}
	Let $x\in\mathcal{M}$ be a local weak Pareto optimizer of problem (\ref{pro1}), then we have
	\begin{equation}\label{eq3.1}
		0_x\in {\rm conv}G(x),
	\end{equation}
	where $G(x)=\bigcup_{i=1}^m\partial f_i(x).$
\end{theorem}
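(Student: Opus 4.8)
The plan is to reduce the Riemannian statement to the classical Euclidean necessary condition for Pareto optimality (the one from \cite{Makela2}) by pulling the problem back through the retraction at $x$. Since $x$ is a local weak Pareto optimal of \eqref{pro1}, there is some $\sigma>0$ such that no $y\in\mathbb{B}(x,\sigma)\cap\mathcal{M}$ satisfies $f_i(y)<f_i(x)$ for all $i$. Using the bound \eqref{eq2.1}, namely ${\rm dist}(R_x(\xi_x),x)\leq\kappa\|\xi_x\|$, there is a radius $r>0$ such that $R_x$ maps the Euclidean ball $B(0_x,r)\subset T_x\mathcal{M}$ into $\mathbb{B}(x,\sigma)\cap\mathcal{M}$. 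Consequently, setting $\hat f_{i,x}:=f_i\circ R_x:T_x\mathcal{M}\to\mathbb{R}$ as in Definition~\ref{def2.6}, the point $0_x$ is a local weak Pareto optimal of the multiobjective problem $\min_{\xi\in T_x\mathcal{M}}(\hat f_{1,x}(\xi),\dots,\hat f_{m,x}(\xi))$: indeed if some $\xi\in B(0_x,r)$ gave $\hat f_{i,x}(\xi)<\hat f_{i,x}(0_x)=f_i(x)$ for all $i$, then $y:=R_x(\xi)\in\mathbb{B}(x,\sigma)\cap\mathcal{M}$ would violate the local weak Pareto optimality of $x$.

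Next I would invoke the classical necessary condition in the Euclidean (in fact Hilbert) space $T_x\mathcal{M}$: since each $\hat f_{i,x}$ is locally Lipschitz near $0_x$ (composition of the locally Lipschitz $f_i$ with the smooth $R_x$), and $0_x$ is a local weak Pareto optimal, the result of \cite{Makela2} yields
\begin{equation*}
0_x\in{\rm conv}\bigcup_{i=1}^m\partial\hat f_{i,x}(0_x).
\end{equation*}
Finally, by Definition~\ref{def2.6} we have $\partial f_i(x)=\partial\hat f_{i,x}(0_x)$ for each $i$, so $\bigcup_{i=1}^m\partial\hat f_{i,x}(0_x)=\bigcup_{i=1}^m\partial f_i(x)=G(x)$, and therefore $0_x\in{\rm conv}\,G(x)$, which is exactly \eqref{eq3.1}.

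The main obstacle — really the only nontrivial point — is making sure the reduction to $T_x\mathcal{M}$ is clean: one must check that local weak Pareto optimality genuinely transfers along $R_x$ (handled above via \eqref{eq2.1} to produce the radius $r$), and that the Euclidean-space statement of \cite{Makela2} applies verbatim with the finite-dimensional inner-product space $T_x\mathcal{M}$ playing the role of $\mathbb{R}^d$. One should also remark that $R_x$ need not be injective or surjective near $0_x$, but this does not matter: we only need $R_x(B(0_x,r))\subseteq\mathbb{B}(x,\sigma)\cap\mathcal{M}$, which \eqref{eq2.1} guarantees, and the optimality of $0_x$ is stated in $T_x\mathcal{M}$ where the subdifferentials live. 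An alternative, essentially equivalent route would be to derive a first-order contradiction directly: if $0_x\notin{\rm conv}\,G(x)$, a separation argument produces a direction $p\in T_x\mathcal{M}$ with $f_i^\circ(x;p)<0$ for all $i$, hence $\hat f_{i,x}$ strictly decreasing along $t\mapsto R_x(tp)$ for small $t>0$, contradicting local weak Pareto optimality; but the pull-back argument above is shorter since it quotes \cite{Makela2} as a black box.
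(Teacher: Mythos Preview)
Your proof is correct, but it follows a different path than the paper's. You pull the problem back to $T_x\mathcal{M}$ via the retraction, verify that $0_x$ is a local weak Pareto optimal of the Euclidean problem $(\hat f_{1,x},\dots,\hat f_{m,x})$, and then invoke the classical result of \cite{Makela2} as a black box; Definition~\ref{def2.6} then identifies $\partial\hat f_{i,x}(0_x)$ with $\partial f_i(x)$ and the conclusion drops out. The paper instead gives a self-contained argument: for an arbitrary direction $d\in T_x\mathcal{M}$ it uses local weak Pareto optimality, Lebourg's mean value theorem, and upper semicontinuity of the Clarke generalized directional derivative to produce some index $i_0$ with $f_{i_0}^\circ(x;d)\geq0$, hence some $\bar\xi\in G(x)$ with $\langle\bar\xi,d\rangle\geq0$; a separation argument then forces $0_x\in\mathrm{conv}\,G(x)$. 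In other words, the paper essentially \emph{reproves} the Euclidean theorem in situ rather than quoting it. Your approach is shorter and conceptually cleaner---and you even sketch the paper's route as your ``alternative''---while the paper's version has the advantage of being fully self-contained and not relying on the reader checking that the statement in \cite{Makela2} applies verbatim to a finite-dimensional inner-product space in place of $\mathbb{R}^d$.
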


\begin{proof}
	We first show that $\overline{G}(x)=\emptyset$, 
	where 
	$$
	\overline{G}(x)=\{d\in T_x\mathcal{M}|\,\,\langle d,\xi\rangle<0 \,\,{\rm for\,\,all\,\,}\xi\in G(x)\}.
	$$
	By Definition \ref{def3.1}, there exists a $\sigma>0$ such that for every $y\in \mathcal{M}\cap \mathbb{B}(x,\sigma)$ there is an index $i\in \{1,\cdots,m\}$ such that inequality $f_i(y)\geq f_i(x)$ holds. 
	Let $d\in T_x\mathcal{M}$ be arbitrary, then there exist sequences $\{d_k\}\subseteq T_x\mathcal{M}$ and $\{t_k\}$ such that $d_k\rightarrow d$ and $t_k\downarrow 0$. Set $\tilde{\varepsilon}=\min\{\frac{\sigma}{\kappa},\iota(x)\}$ and $\varepsilon\in(0,\tilde{\varepsilon})$. It is clear that there exists a constant $N>0$ such that $t_kd_k\in B(0_x,\varepsilon)$ for all $k>N$.
	By (\ref{eq2.1}), we have $R_x(t_kd_k)\in \mathcal{M}\cap \mathbb{B}(x,\sigma)$ for all $k>N$. Then for every $k>N$ there exists an index $i_k\in \{1,\cdots,m\}$ such that $f_{i_k}(R_x(t_kd_k))\geq f_{i_k}(x)$. Since $m$ is finite, there must be an index $i_0\in \{1,\cdots,m\}$ and subsequences $\{d_{k_j}\}\subset \{d_k\}$ and $\{t_{k_j}\}\subset\{t_k\}$ such that 
	$$f_{i_0}\left(R_x(t_{k_j}d_{k_j})\right)\geq f_{i_0}(x)=f_{i_0}(R_x(0_x)),$$
	for all $k_j>N$. 
	Denote $K=\{k_j|\,\,k_j>N\}$. Since $\hat{f}_{i_0x}=f_{i_0}\circ R_x$ is a locally Lipschiz function on $B(0_x,\varepsilon)$, by the mean value theorem (\cite[Thm. 2.3.7]{Clarke.F}), it follows that for all $\bar{k}\in K$, there exists a $\tilde{t}_{\bar{k}}\in (0,t_{\bar{k}})$ such that
	$$\hat{f}_{i_0x}(t_{\bar{k}}d_{\bar{k}})-\hat{f}_{i_0x}(0_x)\in \langle\partial\hat{f}_{i_0x}(\tilde{t}_{\bar{k}}d_{\bar{k}}),t_{\bar{k}}d_{\bar{k}}\rangle.$$
	Then from \cite[Prop. 2.1.2 (b)]{Clarke.F}, we obtain
	$$\hat{f}_{i_0x}^\circ(\tilde{t}_{\bar{k}}d_{\bar{k}};d_{\bar{k}})= \max_{\xi\in\partial\hat{f}_{i_0x}(\tilde{t}_{\bar{k}}d_{\bar{k}})}\langle\xi,d_{\bar{k}}\rangle\geq \frac{1}{t_{\bar{k}}}(\hat{f}_{i_0x}(t_{\bar{k}}d_{\bar{k}})-\hat{f}_{i_0x}(0_x))\geq 0.$$
	Thus, for all $\bar{k}\in K$ we have $\hat{f}_{i_0x}^\circ(\tilde{t}_{\bar{k}}d_{\bar{k}};d_{\bar{k}})\geq 0$. Since $d_{\bar{k}}\rightarrow d$ and $\tilde{t}_{\bar{k}}d_{\bar{k}}\rightarrow 0_x$, 
	from the upper semicontinuous of function $\hat{f}_{i_0x}^\circ$ (see \cite[Prop. 2.1.1 (b)]{Clarke.F}) and Definition \ref{def2.6}, we obtain
	$$f_{i_0}^\circ(x,d)=\hat{f}_{i_0x}^\circ(0_x;d)=\limsup_{\bar{k}\rightarrow \infty}\hat{f}_{i_0x}^\circ(\tilde{t}_{\bar{k}}d_{\bar{k}};d_{\bar{k}})\geq 0.$$
	By \cite[Thm. 2.2 (b)]{Hosseini2}, we have $f_{i_0}^\circ(x,d)=\max_{\xi\in\partial f_{i_0}(x)}\langle\xi,d\rangle$. Therefore, there exists a $\bar{\xi}\in\partial f_{i_0}(x)\subseteq G(x)$ such that $\langle\bar{\xi},d\rangle\geq 0$, which implies $d\notin \overline{G}(x)$, and thus $\overline{G}(x)=\emptyset$.
	
	Now, we show that $0_x\in {\rm conv}G(x)$. Note that $\overline{G}(x)=\emptyset$, then for any $d\in T_x\mathcal{M}$, there exists some $\xi_0\in G(x)\subseteq {\rm conv}G(x)$ such that 
	\begin{equation}\label{eq3.1.1}
		\langle d,\xi_0\rangle\geq 0.
	\end{equation}
	Suppose $0_x\notin {\rm conv}G(x)$. Since the sets ${\rm conv}G(x)$ and $\{0_x\}$ are closed convex sets, there exist $d\in T_x\mathcal{M}$ and $a\in\mathbb{R}$ such that $$0=\langle d,0_x\rangle\geq a \,\,{\rm and}\,\,\langle d,\xi\rangle<a \,\,{\rm for\,\,all}\,\, \xi\in{\rm conv}G(x)$$ 
	according to the separation theorem. The above relations imply that $\langle d,\xi\rangle<0$ for all $\xi\in{\rm conv}G(x)$, which is a contradiction with inequality (\ref{eq3.1.1}). Hence $0_x\in {\rm conv}G(x)$. 
%	\qed
\end{proof}

From Theorem \ref{th3.1} and the previous results, we know that $0_x\in {\rm conv}G(x)$ if $x$ is a Pareto optimizer of problem (\ref{pro1}).  Conversely, when the objective functions are strictly convex, the point 
satisfying (\ref{eq3.1}) is Pareto optimizer of problem (\ref{pro1}); see the lemma below.

\begin{lemma}
	Suppose that the objective functions of problem (\ref{pro1}) are all strictly convex on $\mathcal{M}$.\footnote{A function $f:\mathcal{M}\rightarrow \mathbb{R}$ is said to be convex if the composition $f\circ\gamma:[-\nu,\nu]\rightarrow \mathbb{R}$ is convex 
		for any geodesic segment $\gamma:[-\nu,\nu]\rightarrow \mathcal{M}$ with $\nu>0$; see \cite{Bento5}.  The function $f$ is said to be strictly convex if the composition $f\circ\gamma:[-\nu,\nu]\rightarrow \mathbb{R}$ is strictly convex.} Then every point satisfying (\ref{eq3.1}) is Pareto optimal point of problem (\ref{pro1}).
\end{lemma}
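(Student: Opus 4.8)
The plan is to argue by contradiction, reusing the cone $\overline{G}(x)=\{d\in T_x\mathcal{M}\mid \langle d,\xi\rangle<0\ \text{for all}\ \xi\in G(x)\}$ from the proof of Theorem~\ref{th3.1}. Suppose $x$ satisfies (\ref{eq3.1}) but is not Pareto optimal. Then there are $y\in\mathcal{M}$ with $y\neq x$ and an index $j$ such that $f_i(y)\leq f_i(x)$ for all $i\in\{1,\dots,m\}$ and $f_j(y)<f_j(x)$. Since (\ref{eq3.1}) holds, I would first write $0_x=\sum_{i=1}^m\lambda_i\xi_i$ with $\lambda_i\geq 0$, $\sum_{i=1}^m\lambda_i=1$, and $\xi_i\in\partial f_i(x)$ for each $i$.

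Because $\mathcal{M}$ is complete, the Hopf--Rinow theorem provides a minimizing geodesic $\gamma:[0,1]\to\mathcal{M}$ with $\gamma(0)=x$, $\gamma(1)=y$; set $v=\gamma'(0)\in T_x\mathcal{M}$, which is nonzero since $x\neq y$. The crux is to show $\langle\xi_i,v\rangle<0$ for every $i$. For this I would use that $g_i:=f_i\circ\gamma$ is strictly convex on $[0,1]$ (by the hypothesis on $f_i$ and the footnote definition of strict convexity, after extending $\gamma$ slightly past the endpoints), and that for a geodesically convex function the Riemannian subdifferential $\partial f_i(x)=\partial\hat{f}_{i,x}(0_x)$ agrees with the classical convex subdifferential, so that the subgradient inequality $f_i(\gamma(t))\geq f_i(x)+t\langle\xi_i,v\rangle$ holds for $t\in[0,1]$; equivalently $\langle\xi_i,v\rangle\leq g_i'(0^{+})$. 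Strict convexity of $g_i$ then yields $g_i'(0^{+})<g_i(1)-g_i(0)=f_i(y)-f_i(x)\leq 0$, so indeed $\langle\xi_i,v\rangle<0$ for all $i$.

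The contradiction is then immediate: since at least one $\lambda_i$ is positive, $0=\langle 0_x,v\rangle=\sum_{i=1}^m\lambda_i\langle\xi_i,v\rangle<0$, which is absurd. (Equivalently, $v\in\overline{G}(x)$, contradicting $\overline{G}(x)=\emptyset$, which by the separation argument in the proof of Theorem~\ref{th3.1} is equivalent to (\ref{eq3.1}).) Hence $x$ must be Pareto optimal.

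I expect the one delicate point to be the crux above: identifying the retraction-based Riemannian subdifferential with the convex subdifferential of the geodesically (strictly) convex $f_i$ and obtaining the (strict) subgradient inequality along the minimizing geodesic. A way to avoid any dependence on the choice of retraction is to first note that a convex $f_i$ is Clarke regular, so $f_i^{\circ}(x;v)=\lim_{t\downarrow 0}\bigl(f_i(\gamma(t))-f_i(x)\bigr)/t$, and then combine $\langle\xi_i,v\rangle\leq f_i^{\circ}(x;v)$ (from $\xi_i\in\partial f_i(x)$ and Lemma~\ref{le2.1}) with the monotonicity, and strict monotonicity, of the difference quotients of $g_i$. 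Everything else is elementary convex analysis on $[0,1]$ together with the finite convex-combination bookkeeping.
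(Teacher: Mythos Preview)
Your argument is correct and self-contained, whereas the paper's own proof consists of a single sentence citing \cite[Lem.~1.3]{Attouch H} and nothing more, so the two approaches are genuinely different. The citation is to a result stated in a Hilbert-space setting, and the reader is implicitly left to check that it transfers to the retraction-based Riemannian framework used here; your direct route avoids that external dependence by reducing everything to the strict convexity of the one-variable functions $g_i=f_i\circ\gamma$ along a minimizing geodesic and extracting $\langle\xi_i,v\rangle<0$ from the strict monotonicity of their difference quotients. The one point that truly needs care---and you correctly flag it---is that $\partial f_i(x)$ in this paper is defined through a general retraction $R_x$ while strict convexity is defined through geodesics; your fix via Clarke regularity together with $\mathrm{D}R_x(0_x)=\mathrm{id}_{T_x\mathcal{M}}$ (so that the ordinary directional derivative computed through $R_x$ coincides with $g_i'(0^{+})$) is exactly right. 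One cosmetic remark: writing $0_x=\sum_{i=1}^m\lambda_i\xi_i$ with a single $\xi_i\in\partial f_i(x)$ per index $i$ is legitimate because each $\partial f_i(x)$ is convex, so the Carath\'eodory representation can be regrouped componentwise, but it would be cleaner to say this explicitly.
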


\begin{proof}
	By \cite[Lem. 1.3]{Attouch H}, we immediately obtain this result.
%	\qed
\end{proof}

The method proposed in this paper is a descent method based on the line search strategy.
In particular, for each iteration $k$, we hope to find a descent direction $g_k\in T_{x_k}\mathcal{M}$ and a stepsize $t_k>0$ such that $f_i(x_{k+1})<f_i(x_k)$ for all $i\in\{1,\cdots,m\}$, where $x_{k+1}=R_{x_k}(t_kg_k)$. Next, we will explain how to find such $g_k$. 

%The method proposed in this paper is a descent method, which means that starting from a point $x_0\in\mathcal{M}$, our aim is to generate a sequence $\{x_k\}\subset\mathcal{M}$ in which each point is an improvement over the previous point. This is done by computing directions $g_k\in T_{x_k}\mathcal{M}$ and stepsizes $t_k>0$ such that $x_{k+1}=R_{x_k}(t_kg_k)$ and $f_i(x_{k+1})<f_i(x_k)$ for all $i\in\{1,\cdots,m\}$. Next, we will explain how to find such $g_k$. 

\begin{definition}\label{def3.2}
	For $\varepsilon\in \left(0,\frac{1}{2}\iota(x)\right)$, the $\varepsilon$-subdifferential
	of the objective vector $F(x)$ of problem  (\ref{pro1}) is defined as
	$$G_\varepsilon(x):={\rm conv}\left(\bigcup_{i=1}^m\partial_\varepsilon f_i(x)\right)\subset T_x{\mathcal{M}}.$$
\end{definition}

It is clear that $G_\varepsilon(x)$ is nonempty, convex and compact by Lemma \ref{le2.2}.

\begin{lemma}\label{le3.2}
	Let $\varepsilon\in \left(0,\frac{1}{2}\iota(x)\right)$.
	
	{ (i) If $x$ is a Pareto optimal point of problem (\ref{pro1})} , then
	$0_x\in G_\varepsilon(x)$.
	
	(ii) Let $x\in\mathcal{M}$ and
	\begin{equation}\label{eq3.2}
		\bar{g}:=\mathop{\rm argmin}\limits_{\xi\in-G_\varepsilon(x)}\|\xi\|.
	\end{equation}
	Then, either  $\bar{g}=0_x$ or $\bar{g}\neq 0_x$ and
	\begin{equation}\label{eq3.3}
		\langle\bar{g},\xi\rangle\leq-\|\bar{g}\|^2<0,\,\,\,\,\forall\xi\in G_\varepsilon(x).
	\end{equation}
	%or $\bar{g}=0_x$.% and there is no $g\in T_x\mathcal{M}$ with $\langle g,\xi\rangle<0$ for all $\xi\in G_\varepsilon(x).$
\end{lemma}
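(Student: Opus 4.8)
The plan is to handle the two parts separately; both are short consequences of material already established.

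For part (i), I would use the implications already noted in the text: a Pareto optimal point of (\ref{pro1}) is in particular a local weak Pareto optimal point, so Theorem \ref{th3.1} applies and gives $0_x\in{\rm conv}\,G(x)$ with $G(x)=\bigcup_{i=1}^m\partial f_i(x)$. It then remains only to observe that $\partial f_i(x)\subseteq\partial_\varepsilon f_i(x)$ for every $i$, which follows from Definition \ref{def2.7} by taking $y=x$: then $\eta=R_x^{-1}(x)=0_x$, $\beta_{0_x}=1$ and $\mathcal{T}_{x\leftarrow x}={\rm id}_{T_x\mathcal{M}}$, so the corresponding set in the defining union is exactly $\partial f_i(x)$. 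Consequently $G(x)\subseteq\bigcup_{i=1}^m\partial_\varepsilon f_i(x)$, and taking convex hulls yields ${\rm conv}\,G(x)\subseteq G_\varepsilon(x)$, whence $0_x\in G_\varepsilon(x)$.

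For part (ii), I would first note that $-G_\varepsilon(x)$ is nonempty, convex and compact (since $G_\varepsilon(x)$ is, as remarked after Definition \ref{def3.2}) and that $\xi\mapsto\|\xi\|_x$ is continuous and strictly convex on the finite-dimensional inner product space $T_x\mathcal{M}$; hence the minimizer $\bar g$ in (\ref{eq3.2}) is attained and unique, i.e. it is the metric projection of $0_x$ onto $-G_\varepsilon(x)$. The core step is the standard first-order (variational inequality) characterization of this projection: fix an arbitrary $\xi\in G_\varepsilon(x)$; by convexity $(1-t)\bar g+t(-\xi)\in-G_\varepsilon(x)$ for all $t\in(0,1]$, so minimality of $\|\bar g\|$ gives $\|\bar g+t(-\xi-\bar g)\|^2\geq\|\bar g\|^2$. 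Expanding the left-hand side, subtracting $\|\bar g\|^2$, dividing by $t>0$, and letting $t\downarrow 0$ yields $\langle\bar g,-\xi-\bar g\rangle\geq 0$, that is, $\langle\bar g,\xi\rangle\leq-\|\bar g\|^2$. Since this holds for every $\xi\in G_\varepsilon(x)$, the dichotomy is immediate: if $\bar g=0_x$ we are in the first case, while if $\bar g\neq 0_x$ then $-\|\bar g\|^2<0$ and (\ref{eq3.3}) holds.

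I do not expect a genuine obstacle here: part (i) is just the inclusion $\partial f_i(x)\subseteq\partial_\varepsilon f_i(x)$ combined with Theorem \ref{th3.1}, and part (ii) is the classical projection-onto-a-closed-convex-set argument carried out in the tangent space $T_x\mathcal{M}$. The only points that warrant a line of care are verifying that the $y=x$ term in Definition \ref{def2.7} really collapses to $\partial f_i(x)$ (which rests on $\beta_{0_x}=1$ and $\mathcal{T}_{x\leftarrow x}={\rm id}$), and invoking compactness together with strict convexity of the norm to guarantee that $\bar g$ is well defined before the variational inequality is used.
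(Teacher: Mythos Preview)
Your proposal is correct and follows essentially the same approach as the paper's proof: for (i) the paper simply asserts ${\rm conv}\,G(x)\subseteq G_\varepsilon(x)$ and invokes Theorem \ref{th3.1}, while you spell out the inclusion via the $y=x$ case of Definition \ref{def2.7}; for (ii) the paper quotes the projection inequality $\langle\xi-(-\bar g),-(-\bar g)\rangle\le 0$ directly, while you derive it by the standard $t\downarrow 0$ variational argument. The only minor imprecision is calling the norm itself ``strictly convex'' (it is not as a function, since it is positively homogeneous); uniqueness of $\bar g$ follows instead from strict convexity of $\|\cdot\|^2$, or simply from the fact that $\bar g$ is the metric projection of $0_x$ onto a closed convex set in an inner product space.
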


\begin{proof}
	(i) It is obvious that $ {\rm conv}G(x)\subseteq G_\varepsilon(x)$, then combining with Theorem \ref{th3.1} we immediately obtain $0_x\in G_\varepsilon(x)$. (ii) Since the set $G_\varepsilon(x)$ is nonempty and compact, then there exists some $\bar{g}$ such that $\bar{g}=\mathop{\rm argmin}_{\xi\in-G_\varepsilon(x)}\|\xi\|$. If $\bar{g}\neq 0_x$, we have $-\bar{{g}}={\rm argmin}_{\xi\in G_\varepsilon(x)}\|\xi\|$ by (\ref{eq3.2}). Note that the set $G_\varepsilon(x)$ is convex, so we have the inequality $\langle\xi-(-\bar{g}),-(-\bar{g})\rangle\leq 0$, which implies (\ref{eq3.3}).
%	\qed
\end{proof}

By Lemma \ref{le3.2}, we still have the necessary optimality condition $0_x\in G_\varepsilon(x)$ when working with the $\varepsilon$-subdifferential instead of ${\rm conv}G(x)$. The following lemma shows that for each objective function $f_i$, there exists a common lower bound for a stepsize to guarantee descent when using the direction $\bar{g}$ defined by (\ref{eq3.2}) as a search direction. We extend the result of \cite[Lem. 3.2]{Gebken B} to the Riemannian setting as follows.

\begin{lemma}\label{le3.3}
	Let $\varepsilon\in \left(0,\frac{1}{2}\iota(x)\right)$ and $\bar{g}$ be the solution of (\ref{eq3.2}). Then
	$$f_i(R_x(t\bar{g}))\leq f_i(x)-t\|\bar{g}\|^2,\,\,\,\,\forall\,\, 0\leq t\leq\frac{\varepsilon}{\|\bar{g}\|},\,\,i\in\{1,\cdots,m\}.$$
\end{lemma}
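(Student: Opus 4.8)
The plan is to reduce the claim to a statement about the scalar function $\hat{f}_{ix} = f_i \circ R_x$ along the ray $t \mapsto t\bar{g}$ in $T_x\mathcal{M}$, and then apply the mean value theorem together with the defining property of the $\varepsilon$-subdifferential. Fix $i \in \{1,\dots,m\}$ and fix $t$ with $0 < t \le \varepsilon/\|\bar{g}\|$ (the case $t=0$ being trivial). By the mean value theorem for locally Lipschitz functions (\cite[Thm.~2.3.7]{Clarke.F}), there is $\tau \in (0,t)$ and some $\zeta \in \partial \hat{f}_{ix}(\tau \bar{g})$ such that
\begin{equation*}
\hat{f}_{ix}(t\bar{g}) - \hat{f}_{ix}(0_x) = \langle \zeta, t\bar{g} \rangle = t\langle \zeta, \bar{g} \rangle.
\end{equation*}
Since $f_i(R_x(t\bar g)) = \hat f_{ix}(t\bar g)$ and $f_i(x) = \hat f_{ix}(0_x)$, it suffices to show $\langle \zeta, \bar{g}\rangle \le -\|\bar{g}\|^2$.

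The key point is to identify $\zeta$ as a Riemannian $\varepsilon$-subgradient of $f_i$ at $x$. Set $y := R_x(\tau \bar{g})$; then $\eta := R_x^{-1}(y) = \tau\bar g$ is well defined because $\|\tau\bar g\| < t\|\bar g\| \le \varepsilon < \frac12\iota(x) < \iota(x)$, so $y \in B_R(x,\varepsilon)$. Now $\zeta \in \partial\hat f_{ix}(\tau\bar g)$; I would relate this to $\partial f_i(y) = \partial \hat f_{iy}(0_y)$ via the chain-rule/transport identity that underlies Definition~\ref{def2.7} — concretely, the element $\beta_\eta^{-1}\mathcal{T}_{x\leftarrow y}$ applied to $\partial f_i(y)$ produces exactly the tangent vectors at $x$ of the form arising from differentiating $\hat f_{ix} = \hat f_{iy} \circ (R_y^{-1}\circ R_x)$ at the point $\tau\bar g$ (this is the content of \cite{Hosseini2} used to define $\partial_\varepsilon$, and here the locking condition (\ref{locking}) guarantees the $\beta$-factor matches). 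Hence $\zeta \in \beta_\eta^{-1}\mathcal{T}_{x\leftarrow y}(\partial f_i(y)) \subseteq \partial_\varepsilon f_i(x) \subseteq G_\varepsilon(x)$.

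With $\zeta \in G_\varepsilon(x)$ in hand, the conclusion is immediate from Lemma~\ref{le3.2}(ii): if $\bar{g} = 0_x$ the desired inequality reads $f_i(R_x(0)) \le f_i(x)$, which holds with equality; and if $\bar{g}\neq 0_x$, then (\ref{eq3.3}) gives $\langle \zeta, \bar{g}\rangle \le -\|\bar{g}\|^2$, so
\begin{equation*}
f_i(R_x(t\bar g)) - f_i(x) = t\langle \zeta,\bar g\rangle \le -t\|\bar g\|^2,
\end{equation*}
which is the claim. I expect the main obstacle to be the second paragraph: making rigorous the assertion that every Clarke subgradient of $\hat f_{ix}$ at an interior point $\tau\bar g$ of the ball $B(0_x,\varepsilon)$ lies in $\partial_\varepsilon f_i(x)$. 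One must check that the change-of-retraction map $R_y^{-1}\circ R_x$ is a well-defined diffeomorphism near $\tau\bar g$ sending it to $0_y$ (using the injectivity radius bound and $\varepsilon < \frac12\iota(x)$ so that $y$ and all relevant points stay within injectivity range), that its derivative at $\tau\bar g$ is precisely $\beta_\eta^{-1}\mathcal{T}_{x\leftarrow y}$ up to the identifications in Definitions~\ref{def2.3}–\ref{def2.4}, and then invoke the chain rule for Clarke subdifferentials under a smooth change of variables. Everything else is a routine bookkeeping of norms and the direct quotation of Lemma~\ref{le3.2}.
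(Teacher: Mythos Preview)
Your overall strategy---mean value theorem along the ray, then identify the resulting subgradient as an element of $G_\varepsilon(x)$, then invoke (\ref{eq3.3})---matches the paper's. The gap is in the second paragraph. You assert that the derivative of $R_y^{-1}\circ R_x$ at $\tau\bar g$ is ``precisely $\beta_\eta^{-1}\mathcal{T}_{x\leftarrow y}$'', which would give $\partial\hat f_{ix}(\tau\bar g)\subseteq \beta_\eta^{-1}\mathcal{T}_{x\leftarrow y}(\partial f_i(y))$. But the chain rule yields $\partial\hat f_{ix}(\tau\bar g)=[{\rm D}R_x(\eta)]^*\partial f_i(y)=\mathcal{T}_{R_\eta}^*\partial f_i(y)$, and the locking condition (\ref{locking}) only says $\mathcal{T}_\eta(\eta)=\beta_\eta\mathcal{T}_{R_\eta}(\eta)$ for the \emph{single vector} $\eta$; it does \emph{not} assert $\mathcal{T}_{R_\eta}=\beta_\eta^{-1}\mathcal{T}_\eta$ as linear maps. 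Already in the model case (exponential map and parallel transport) the derivative ${\rm D}\exp_x(\eta)$ is governed by Jacobi fields and differs from parallel transport except in the direction of $\eta$. So in general $\zeta\notin\partial_\varepsilon f_i(x)$, and the inclusion you need fails.

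The paper sidesteps this by using Lebourg's Riemannian mean value theorem \cite[Thm.~3.3]{Hosseini3}, which produces directly a $\xi\in\partial f_i(R_x(\theta t\bar g))$ together with the increment $\langle\xi,{\rm D}R_x(\theta t\bar g)[t\bar g]\rangle$. Because $t\bar g$ is parallel to $\eta=\theta t\bar g$, the locking condition applied to $\eta$ alone is enough to rewrite ${\rm D}R_x(\eta)[t\bar g]=\tfrac{t}{\beta_\eta}\mathcal{T}_\eta(\bar g)$, and the isometry (\ref{inner}) then gives $\langle\xi,{\rm D}R_x(\eta)[t\bar g]\rangle=\tfrac{t}{\beta_\eta}\langle\mathcal{T}_{x\leftarrow y}\xi,\bar g\rangle$. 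Now $\beta_\eta^{-1}\mathcal{T}_{x\leftarrow y}\xi\in\partial_\varepsilon f_i(x)\subset G_\varepsilon(x)$ by Definition~\ref{def2.7}, and (\ref{eq3.3}) finishes. Your route is repairable in the same spirit: instead of claiming $\zeta\in\partial_\varepsilon f_i(x)$, write $\zeta=\mathcal{T}_{R_\eta}^*\xi$ via the chain rule and compute $\langle\zeta,\bar g\rangle=\langle\xi,\mathcal{T}_{R_\eta}\bar g\rangle$; since $\bar g$ is parallel to $\eta$, the locking condition now legitimately converts this to $\beta_\eta^{-1}\langle\mathcal{T}_{x\leftarrow y}\xi,\bar g\rangle$, and you conclude as before. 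But note that this fix is essentially the paper's computation in disguise.
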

\begin{proof}
	For all $t\in\left[0,\frac{\varepsilon}{\|\bar{g}\|}\right]$, by Lebourg's mean value theorem \cite[Thm. 3.3]{Hosseini3}, there exist $\theta\in(0,1)$ and $\xi\in\partial f_i(R_x(\theta t\bar{g}))$ such that
	$$f_i(R_x(t\bar{g}))-f_i(x)=\langle\xi,{\rm D}R_x(\theta t\bar{g})[t\bar{g}]\rangle,\,\,i\in\{1,\cdots,m\}.$$
	It is clear that $\|\theta t\bar{g}\|<\varepsilon$. Combining (\ref{inner}) and the locking condition (\ref{locking}) of the vector transport, we have that
	%\begin{eqnarray*}
	$$f_i(R_x(t\bar{g}))-f_i(x) = \frac{t}{\beta_{\theta t\bar{g}}}\langle\mathcal{T}_{x\leftarrow R_x(\theta t\bar{g})}(\xi),\bar{g}\rangle,\,\,i\in\{1,\cdots,m\}.$$
	%\end{eqnarray*}
	Since $\|\theta t\bar{g}\|<\varepsilon$, it follows that $\frac{1}{\beta_{\theta t\bar{g}}}\mathcal{T}_{x\leftarrow R_x(\theta t\bar{g})}(\xi)\in\partial_\varepsilon f_i(x)\subset G_\varepsilon(x)$, then from (\ref{eq3.3}) we obtain
	$$f_i(R_x(t\bar{g}))-f_i(x)\leq-t\|\bar{g}\|^2,\,\,i\in\{1,\cdots,m\}.$$
	This completes the proof.
%	\qed
\end{proof}

Lemma \ref{le3.3} states that $\bar{g}$ is a descent direction of $f_i$ for every { $i\in\{1,\cdots,m\}$}.
However, it is not easy to compute $\bar{g}$ in practice, since the set $G_\varepsilon(x)$ is usually unknown. A natural idea is to approximate $G_\varepsilon(x)$ by the convex hull of a certain set $W$,
which is expected to have at least two properties: 
(i) it is much easier to compute than $G_\varepsilon(x)$; 
(ii) $\tilde{g}={\rm argmin}_{\xi\in -{\rm conv}W}\|\xi\|$ can be used instead of $\bar{g}$ as an approximate descent direction of $f_i$ for every { $i\in\{1,\cdots,m\}$}.

Now, we present the details of our algorithm (Algorithm \ref{al1}).

\LinesNumbered
\begin{algorithm}\label{al1}
	\setcounter{AlgoLine}{-1}
	\caption{A descent method for Riemannian nonsmooth multiobjective optimization}
	\normalsize
	Select an initial point $x_0\in\mathcal{M}$,   $\varepsilon\in\left(0,\frac{1}{2}\iota(\mathcal{M})\right)$, tolerance $\delta>0$, and Armijo parameters $c\in(0,1)$, $\alpha>1$, $t_0>0$. Let $k=0$.
	
	Compute an acceptable descent direction: $g_k=\mathcal{P}_{\rm dd}(x_k,\varepsilon,\delta,c)$, where $\mathcal{P}_{\rm dd}$ is a procedure given below.
	
	If $\|g_k\|\leq\delta$, then STOP.
	
	Find the smallest integer $\ell\in\left\{0,1,\cdots,\left[\frac{{\rm ln}(t_0\|g_k\|)-{\rm ln}\varepsilon}{{\rm ln}\alpha}\right]\right\}$ satisfying
	\begin{equation*}\label{eq3.4}
		f_i(R_{x_k}(\alpha^{-\ell}t_0g_k))\leq f_i(x_k)-\alpha^{-\ell}t_0c\|g_k\|^2,\,\,\ \  i\in\{1,\cdots,m\}.
	\end{equation*}
	If such an $\ell$ exists, set $t_k=\alpha^{-\ell}t_0$. Otherwise, set $t_k=\frac{\varepsilon}{\|g_k\|}$.
	
	Set $x_{k+1}=R_{x_k}(t_kg_k),\,\,k=k+1$ and go to Step 1.
\end{algorithm}

\begin{remark}
	In Step 1 of Algorithm \ref{al1}, the aim of the inner procedure $\mathcal{P}_{\rm dd}$
	is to find an acceptable descent direction of $f_i$ for every { $i\in\{1,\cdots,m\}$}, 
	which uses the substitute ${\rm conv}W$ instead of $G_\varepsilon(x)$.	
	In Step 3, the symbol $[\,\cdot\,]$ denotes the largest integer that does not exceed $\cdot$.
	In what follows, we will show that $\frac{\varepsilon}{\|g_k\|}$ is a common descent stepsize for all objective functions when using $g_k$ as the search direction. 
	The line search strategy of Step 3 means that if there is a longer stepsize $\alpha^{-\ell}t_0$ than $\frac{\varepsilon}{\|g_k\|}$, then we use $\alpha^{-\ell}t_0$ as the stepsize. Otherwise we use the latter.
\end{remark}

Next, we describe how we can obtain a good approximation of $G_\varepsilon(x)$ without requiring full knowledge of the $\varepsilon$-subdifferential. Let $W=\{\xi_1,\cdots,\xi_r\}\subseteq G_\varepsilon(x)$ and
\begin{equation}\label{eq3.5}
	\tilde{g}:=\mathop{\rm argmin}\limits_{g\in-{\rm conv}W}\|g\|.
\end{equation}
If $\tilde{{g}}\neq 0_x$, then set $c\in(0, 1)$ and check the following inequality:
\begin{equation}\label{eq3.6}
	f_i\left(R_x\left(\frac{\varepsilon}{\|\tilde g\|}\tilde{g}\right)\right)\leq f_i(x)-c\varepsilon\|\tilde{g}\|,\,\,\,\,\forall\,\, i\in\{1,\cdots,m\}.
\end{equation}
If (\ref{eq3.6}) holds, then we can say ${\rm conv}W$ is an acceptable approximation for $G_\varepsilon(x)$, and $\tilde{g}$ is an  acceptable descent direction. Otherwise, the set $I\subseteq\{1,\cdots,m\}$ consists of the indices for which (\ref{eq3.6}) is not satisfied is nonempty,
then we hope to find a new $\varepsilon$-subgradient $\xi'\in G_\varepsilon(x)$ such that $W\cup\xi'$ yields a better approximation to $G_\varepsilon(x)$. The following lemma can help us to find such an $\varepsilon$-subgradient.

\begin{lemma}\label{le3.4}
	Let $c\in(0, 1)$, $W=\{\xi_1,\cdots,\xi_r\}\subseteq G_\varepsilon(x)$ and $\tilde{g}\neq 0_x$ be the solution of (\ref{eq3.5}). If
	\begin{equation}\label{eq3.6.1}
		f_j\left(R_x\left(\frac{\varepsilon}{\|\tilde{g}\|}\tilde{g}\right)\right)> f_j(x)-c\varepsilon\|\tilde{g}\|
	\end{equation}
	for some $j\in\{1,\cdots,m\}$, then there exist some $t'\in\left[0,\frac{\varepsilon}{\|\tilde{\rm g}\|}\right]$ and $\xi'\in\partial f_j(R_x(t'\tilde{g}))$ such that
	\begin{equation}\label{eq3.7}
		\langle\beta^{-1}_{t'\tilde{g}}\mathcal{T}_{x\leftarrow R_x(t'\tilde{g})}\xi',\tilde{g}\rangle>-c\|\tilde{g}\|^2,
	\end{equation}
	and 
	\begin{equation}\label{eq3.7b}
		\xi=\beta^{-1}_{t'\tilde{g}}\mathcal{T}_{x\leftarrow R_x(t'\tilde{g})}\xi'\in G_\varepsilon(x)\setminus {\rm conv}W.
	\end{equation}
	
\end{lemma}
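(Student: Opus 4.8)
The plan is to mimic the Euclidean argument of \cite[Lem.~3.2]{Gebken B}, transplanting each step through the retraction $R_x$ via Lebourg's mean value theorem, exactly as was done in the proof of Lemma~\ref{le3.3}. First I would apply Lebourg's mean value theorem (\cite[Thm.~3.3]{Hosseini3}) to the function $t\mapsto \hat{f}_{jx}(t\tilde g)=f_j(R_x(t\tilde g))$ on the interval $\left[0,\frac{\varepsilon}{\|\tilde g\|}\right]$: this yields some $\theta\in(0,1)$, hence $t'=\theta\frac{\varepsilon}{\|\tilde g\|}\in\left[0,\frac{\varepsilon}{\|\tilde g\|}\right]$ with $\|t'\tilde g\|<\varepsilon$, and some $\xi'\in\partial f_j(R_x(t'\tilde g))$ such that
$$
f_j\!\left(R_x\!\left(\tfrac{\varepsilon}{\|\tilde g\|}\tilde g\right)\right)-f_j(x)=\Big\langle\xi',{\rm D}R_x(t'\tilde g)\Big[\tfrac{\varepsilon}{\|\tilde g\|}\tilde g\Big]\Big\rangle .
$$
Using the locking condition (\ref{locking}) and the inner-product preservation (\ref{inner}), the right-hand side rewrites as $\frac{\varepsilon}{\|\tilde g\|}\,\beta_{t'\tilde g}^{-1}\big\langle\mathcal{T}_{x\leftarrow R_x(t'\tilde g)}\xi',\tilde g\big\rangle$, just as in Lemma~\ref{le3.3}.

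Next I would feed in hypothesis (\ref{eq3.6.1}), which says the left-hand side of the displayed identity is strictly greater than $-c\varepsilon\|\tilde g\|$. Combining, $\frac{\varepsilon}{\|\tilde g\|}\,\beta_{t'\tilde g}^{-1}\big\langle\mathcal{T}_{x\leftarrow R_x(t'\tilde g)}\xi',\tilde g\big\rangle>-c\varepsilon\|\tilde g\|$, and dividing by $\frac{\varepsilon}{\|\tilde g\|}>0$ gives $\beta_{t'\tilde g}^{-1}\big\langle\mathcal{T}_{x\leftarrow R_x(t'\tilde g)}\xi',\tilde g\big\rangle>-c\|\tilde g\|^2$, which is precisely (\ref{eq3.7}) once one notes that the scalar $\beta_{t'\tilde g}^{-1}$ may be pulled inside the inner product. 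Setting $\xi=\beta_{t'\tilde g}^{-1}\mathcal{T}_{x\leftarrow R_x(t'\tilde g)}\xi'$, the membership $\xi\in G_\varepsilon(x)$ in (\ref{eq3.7b}) follows immediately from Definition~\ref{def2.7}, since $\|t'\tilde g\|<\varepsilon$ forces $R_x(t'\tilde g)\in{\rm cl}B_R(x,\varepsilon)$, so $\beta_{t'\tilde g}^{-1}\mathcal{T}_{x\leftarrow R_x(t'\tilde g)}(\partial f_j(R_x(t'\tilde g)))\subseteq \partial_\varepsilon f_j(x)\subseteq G_\varepsilon(x)$.

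It remains to establish $\xi\notin{\rm conv}W$, and this is the one genuinely new point compared with Lemma~\ref{le3.3}. The argument is the standard projection/separation reasoning: by (\ref{eq3.5}), $-\tilde g$ is the minimum-norm element of the convex set ${\rm conv}W$, so the variational inequality for the projection gives $\langle \eta-(-\tilde g),-(-\tilde g)\rangle\le 0$, i.e. $\langle \eta,\tilde g\rangle\le -\|\tilde g\|^2$ for every $\eta\in{\rm conv}W$. Since $c<1$ and $\tilde g\neq 0_x$, we have $-c\|\tilde g\|^2>-\|\tilde g\|^2$, so (\ref{eq3.7}) says $\langle\xi,\tilde g\rangle>-c\|\tilde g\|^2>-\|\tilde g\|^2\ge\langle\eta,\tilde g\rangle$ for all $\eta\in{\rm conv}W$; in particular $\xi$ cannot equal any $\eta\in{\rm conv}W$, proving $\xi\in G_\varepsilon(x)\setminus{\rm conv}W$. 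I expect the main obstacle to be purely bookkeeping: carefully tracking the factor $\beta_{t'\tilde g}$ and the direction of the inverse vector transport $\mathcal{T}_{x\leftarrow R_x(t'\tilde g)}$ through the locking condition, and making sure $t'$ lies in the closed interval so that the $\varepsilon$-subdifferential membership is valid on the boundary case $t'=\frac{\varepsilon}{\|\tilde g\|}$ (here the factor $\tfrac12$ in $\varepsilon\in(0,\tfrac12\iota(x))$ is what guarantees the inverse transport is still well defined). No deep new idea is needed beyond what already appears in Lemma~\ref{le3.3} plus the elementary separation step.
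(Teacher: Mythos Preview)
Your proof is correct and follows essentially the same route as the paper: Lebourg's mean value theorem together with the locking condition to obtain (\ref{eq3.7}), and the minimum-norm variational inequality $\langle\eta,\tilde g\rangle\le-\|\tilde g\|^2$ for $\eta\in{\rm conv}W$ to rule out $\xi\in{\rm conv}W$. The only cosmetic difference is that the paper phrases the first part as a contradiction (assuming (\ref{eq3.7}) fails for every $t'$ and $\xi'$ and then deriving the negation of (\ref{eq3.6.1})), whereas you argue directly; both amount to the same mean-value computation.
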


\begin{proof}
	Suppose for all $t'\in\left[0,\frac{\varepsilon}{\|\tilde{g}\|}\right]$ and $\xi'\in\partial f_j(R_x(t'\tilde{g}))$ we have
	\begin{equation}\label{eq3.8}
		\langle\beta^{-1}_{t'\tilde{g}}\mathcal{T}_{x\leftarrow R_x(t'\tilde{g})}\xi',\tilde{g}\rangle\leq-c\|\tilde{g}\|^2.
	\end{equation}
	Next we show that it is impossible.  In fact, by Lebourg's mean value theorem, there exist $\theta\in(0,1)$ and $\tilde{\xi}\in\partial f_j\left(R_x\left(\theta\frac{\varepsilon}{\|\tilde{g}\|}\tilde{g}\right)\right)$ such that
	$$f_j\left(R_x\left(\frac{\varepsilon}{\|\tilde{g}\|}\tilde{g}\right)\right)-f_j(x)=\left\langle\tilde{\xi},{\rm D}R_x\left(\theta\frac{\varepsilon}{\|\tilde{g}\|}\tilde{g}\right)\left[\frac{\varepsilon}{\|\tilde{g}\|}\tilde{g}\right]\right\rangle.$$
	Note that $\|\theta\frac{\varepsilon}{\|\tilde{g}\|}\tilde{g}\|<\varepsilon$, then using (\ref{inner}) and the locking condition (\ref{locking}) of the vector transport, we obtain
	%\begin{eqnarray*}
	$$f_j\left(R_x\left(\frac{\varepsilon}{\|\tilde{g}\|}\tilde{g}\right)\right)-f_j(x)=\frac{\varepsilon}{\|\tilde{g}\|\beta_{\theta\frac{\varepsilon}{\|\tilde{g}\|}\tilde{g}}}\left\langle\mathcal{T}_{x\leftarrow R_x\left(\theta\frac{\varepsilon}{\|\tilde{g}\|}\tilde{g}\right)}(\tilde{\xi}),\tilde{{g}}\right\rangle.$$
	%\end{eqnarray*}
	This together with $\theta\in(0,1)$ and (\ref{eq3.8}) shows that 
	$$f_j\left(R_x\left(\frac{\varepsilon}{\|\tilde{g}\|}\tilde{g}\right)\right)-f_j(x)\leq \frac{\varepsilon}{\|\tilde{g}\|}(-c\|\tilde{g}\|^2)= -c\varepsilon\|\tilde{g}\|,$$
	which is a contradiction with (\ref{eq3.6.1}), and therefore (\ref{eq3.7}) holds.
	
	Finally, we prove (\ref{eq3.7b}). 
	Note that $t'\in\left[0,\frac{\varepsilon}{\|\tilde{g}\|}\right]$, thus $\|t'\tilde{g}\|\leq\varepsilon$, which implies that
	$\xi=\beta^{-1}_{t'\tilde{g}}\mathcal{T}_{x\leftarrow R_x(t'\tilde{g})}\xi'\in G_\varepsilon(x)$. If $\xi\in {\rm conv}W$, we have $\langle\xi,\tilde{g}\rangle\leq-\|\tilde{g}\|^2<-c\|\tilde{g}\|^2$, which is a contradiction with (\ref{eq3.7}). Thus (\ref{eq3.7b}) holds.
%	\qed
\end{proof}

Lemma \ref{le3.4} above only implies that there exist $t'$ and $\xi'$ 
satisfying (\ref{eq3.7}) without showing a way how to obtain them.
Now, we present a procedure ($\mathcal{P}_{\rm ns}$) which can help us to compute such $t'$ and $\xi'$ in practice. For simplicity, denote
\begin{equation*}\label{eq3.9}
	h_j(t):=f_j(R_x(t\tilde{g}))-f_j(x)+ct\|\tilde{g}\|^2
\end{equation*}
for $j\in I$, i.e., $f_j$ is not satisfied with (\ref{eq3.6}). 

\vskip 0.3cm

\renewcommand{\algorithmcfname}{Procedure}
\renewcommand{\thealgocf}{}
\LinesNumbered
\begin{algorithm}[H]
	\setcounter{AlgoLine}{-1}
	\caption{Find a new $\varepsilon$-subgradient: $(t,\tilde{\xi}^j_s)=\mathcal{P}_{\rm ns}(j,x_k,\tilde{g}_s,\varepsilon,c)$}
	
	\normalsize
	
	Set $a=0,b=\frac{\varepsilon}{\|\tilde{g}_s\|}$ and $t=\frac{a+b}{2}$.
	
	Compute $\tilde{\xi}^j_s\in\partial f_j(R_{x_k}(t\tilde{g}_s))$.
	
	If $\langle\beta^{-1}_{t\tilde{g}_s}\mathcal{T}_{x_k\leftarrow R_{x_k}(t\tilde{g}_s)}\tilde{\xi}^j_s,\tilde{g}_s\rangle>-c\|\tilde{g}_s\|^2$ then STOP.
	
	If $h_j(b)>h_j(t)$ set $a=t$. Else set $b=t$.
	
	Set $t=\frac{a+b}{2}$ and go to Step 1.
\end{algorithm}

Based on this procedure,
it is possible to construct another procedure that can compute an acceptable descent direction of $f_i$ for {$i\in\{1,\cdots,m\}$}, namely procedure $\mathcal{P}_{\rm dd}$ used in Step 1 of Algorithm \ref{al1}.

\renewcommand{\algorithmcfname}{Procedure}
\renewcommand{\thealgocf}{}
%\LinesNotNumbered
\LinesNumbered
\begin{algorithm}
	\setcounter{AlgoLine}{-1}
	\caption{Compute a descent direction: $g_k=\mathcal{P}_{\rm dd}(x_k,\varepsilon,\delta,c)$}
	
	\normalsize
	
	Compute $\xi^i\in\partial_\varepsilon f_i(x_k)$ for all $i\in\{1,\cdots,m\}$. Set $W_0=\{\xi^1,\cdots,\xi^m\}$ and $s=0$.
	
	Compute $\tilde{g}_s=\mathop{\rm argmin}\limits_{g\in-{\rm conv}W_s}\|g\|$.
	
	If $\|\tilde{g}_s\|\leq \delta$, set $g_k=\tilde{g}_s$ and STOP.
	
	Find all indices for which there is no sufficient decrease:
	$$I_s=\left\{j\in\{1,\cdots,m\}:\ f_j\left(R_{x_k}\left(\frac{\varepsilon}{\|\tilde{g}_s\|}\tilde{g}_s\right)\right)>f_j(x_k)-c\varepsilon\|\tilde{g}_s\|\right\}.$$
	If $I_s=\emptyset$, set $g_k=\tilde{g}_s$, then STOP.
	
	For each $j\in I_s$, compute $(t,\tilde{\xi}^j_s)=\mathcal{P}_{\rm ns}(j,x_k,\tilde{g}_s,\varepsilon,c)$, and set $\xi_s^j=\beta_{t\tilde{g}_s}^{-1}\mathcal{T}_{x_k\leftarrow R_{x_k}(t\tilde{g}_s)}\tilde{\xi}_s^j$.
	
	Set $W_{s+1}=W_s\cup\{\xi_s^j:\ j\in I_s\},\,\,s=s+1$ and go to Step 1.
\end{algorithm}

\section{Convergence analysis}\label{sec4}
In this section, we show that the Algorithm \ref{al1} is well defined at first. That is, the Procedure $\mathcal{P}_{\rm ns}$ and $\mathcal{P}_{\rm dd}$ can be terminated in a finite number of iterations, respectively. And then we establish the convergence of Algorithm \ref{al1}.

\begin{lemma}\label{th3.2}
	For the current point $x_k$, let $j\in I$ and $\{t_i\}$ be the sequence generated by the Procedure $\mathcal{P}_{\rm ns}$.
	
	(i) If $\{t_i\}$ is finite, then some $\xi'$ was found to satisfy (\ref{eq3.7}).
	
	(ii) If $\{t_i\}$ is infinite, then it converges to some $\bar{t}\in\left[0,\frac{\varepsilon}{\|\tilde{g}_s\|}\right]$ such that either there is some $\xi'\in\partial f_j(R_{x_k}(\bar{t}\tilde{g}_s))$ satisfying (\ref{eq3.7}) or $0\in\partial h_j(\bar{t})$.
\end{lemma}
\begin{proof}
	(i) If $\{t_i\}$ is finite, by construction, the procedure $\mathcal{P}_{\rm ns}$ must have stopped in Step 2, then some $\xi'=\tilde{\xi}^j_s$ was found to satisfy (\ref{eq3.7}).
	
	(ii) If $\{t_i\}$ is infinite, it is clear that $\{t_i\}$ must be convergent to some $\bar{t}\in\left[0,\frac{\varepsilon}{\|\tilde{g}_s\|}\right]$. Additionally, we have $h_j(0)=0$ and $h_j\left(\frac{\varepsilon}{\|\tilde{g}_s\|}\right)>0$ since (\ref{eq3.6}) is violated for the index $j$. Let $\{a_i\}$ and $\{b_i\}$ be the sequences corresponding to $a$ and $b$ in Procedure $\mathcal{P}_{\rm ns}$. 
	
	We first show that $\bar{t}\neq 0$. Suppose by contradiction that $\bar{t}=0$. By the construction of $\mathcal{P}_{\rm ns}$, we have $h_j(t_i)\geq h_j(b_i)$ for all $i\in\mathbb{N}$. Then
	$$h_j(t_i)\geq h_j(b_i)=h_j(t_{i-1})\geq h_j(b_{i-1})=\cdots=h_j(t_1)\geq h_j(b_1)=h_j\left(\frac{\varepsilon}{\|\tilde{g}_s\|}\right)>0.$$
	Due to the continuity of $h_j$, we obtain $h_j(0)=\lim_{i\rightarrow\infty}h_j(t_i)\geq h_j\left(\frac{\varepsilon}{\|\tilde{g}_s\|}\right)>0$, which is a contradiction. 
	
	So we must have $\bar{t}>0$. Furthermore, it is clear that $h_j(b_i)>h_j(a_i)$ for all $i\in\mathbb{N}$ by the construction of $\mathcal{P}_{\rm ns}$. Since the function $h_j$ is locally Lipschiz continuous on $\left[0,\frac{\varepsilon}{\|\tilde{g}_s\|}\right]$, by the mean value theorem there is some $r_i\in[a_i,b_i]$ such that
	$$0<h_j(b_i)-h_j(a_i)\in(b_i-a_i)\partial h_j(r_i).$$
	It is obvious that $\lim_{i\rightarrow\infty}r_i=\bar{t}$ and $a_i<b_i$, thus we have $\partial h_j(r_i)\cap\mathbb{R}_+\neq\emptyset$ for all $i\in\mathbb{N}$. Due to the upper semicontinuity of $\partial h_j$, there must be some $v\in\partial h_j(\bar{t})$ with $v\geq 0$. By \cite[Prop. 3.1]{Hosseini3}, we obtain
	\begin{equation}\label{eq3.10}
		0\leq v\in\partial h_j(\bar{t})\subseteq\langle\partial f_j(R_{x_k}(\bar{t}{\tilde{g}}_s)),{\rm D}R_{x_k}(\bar{t}\tilde{g}_s)[\tilde{g}_s]\rangle+c\|\tilde{g}_s\|^2.
	\end{equation}
	Thus, if there is some $\xi_0\in\partial f_j(R_{x_k}(\bar{t}\tilde{g}_s))$ with
	$0<\langle\xi_0,{\rm D}R_{x_k}(\bar{t}\tilde{g}_s)[\tilde{g}_s]\rangle+c\|\tilde{g}_s\|^2$, 
	i.e.
	$\langle\xi_0,{\rm D}R_{x_k}(\bar{t}\tilde{g}_s)[\tilde{g}_s]\rangle>-c\|\tilde{g}_s\|^2,$
	then using (\ref{inner}) and the locking condition (\ref{locking}) of vector transport, we have that
	$$\langle\beta^{-1}_{\bar{t}\tilde{g}_s}\mathcal{T}_{x_k\leftarrow R_{x_k}(\bar{t}\tilde{g}_s)}\xi_0,\tilde{g}_s\rangle>-c\|\tilde{g}_s\|^2,$$
	which shows that $\xi'=\xi_0\in\partial f_j(R_{x_k}(\bar{t}\tilde{g}_s))$ satisfies (\ref{eq3.7}). Otherwise, we obtain
	$$\langle\xi,{\rm D}R_{x_k}(\bar{t}\tilde{g}_s)[\tilde{g}_s]\rangle+c\|\tilde{g}_s\|^2\leq 0, \ \ \ \forall \xi\in\partial f_j(R_{x_k}(\bar{t}\tilde{g}_s)).
	$$
	This along (\ref{eq3.10}) implies $0=v\in\partial h_i(\bar{t})$.
%	\qed
\end{proof}

In fact, the Procedure ($\mathcal{P}_{\rm ns}$) will stop after finitely many iterations in practice; see \cite[Remark 3.1]{Gebken B}.

\begin{lemma}\label{th3.3}
	The Procedure $\mathcal{P}_{\rm dd}$ terminates in a finite number of iterations. In addition, let  $\hat{g}$ be the last element of $\{\tilde{g}_s\}$, then either $\|\hat{g}\|\leq\delta$ or $\hat{g}$ is an acceptable descent direction, that is
	$$f_i\left(R_{x_k}\left(\frac{\varepsilon}{\|\hat{g}\|}\hat{g}\right)\right)\leq f_i(x_k)-c\varepsilon\|\hat{g}\|, \ \ \ \forall i\in\{1,\cdots,m\}.$$
\end{lemma}
\begin{proof}
	Suppose by contradiction that $\{\tilde{g}_s\}$ is an infinite sequence. Let $s\geq1$ and $j\in I_{s-1}$. By the construction of $\mathcal{P}_{\rm dd}$, it follows that $\xi_{s-1}^j\in W_s$ and $-\tilde{g}_{s-1}\in {\rm conv}W_{s-1}\subseteq {\rm conv}W_s$. Since $\tilde{g}_s={\rm argmin}_{g\in-{\rm conv}W_s}\|g\|$, for all $\lambda\in(0,1)$, we have
	\begin{eqnarray}\label{eq3.11}
		{ \|\tilde{g}_s\|^2}
		& {<} & {\|-\tilde{g}_{s-1}+\lambda(\xi_{s-1}^j+\tilde{g}_{s-1})\|^2} \nonumber\\
		%& = &\|\tilde{g}_{s-1}\|^2-2\lambda\langle\tilde{g}_{s-1},\xi_{s-1}^j+\tilde{g}_{s-1}\rangle+\lambda^2\|\xi_{s-1}^j+\tilde{g}_{s-1}\|^2\nonumber\\
		& = &\|\tilde{g}_{s-1}\|^2-2\lambda\langle\tilde{g}_{s-1},\xi_{s-1}^j\rangle-2\lambda\|\tilde{g}_{s-1}\|^2+\lambda^2\|\xi_{s-1}^j+\tilde{g}_{s-1}\|^2.
	\end{eqnarray}
	Note that $j\in I_{s-1}$, then by Step 4 of $\mathcal{P}_{\rm dd}$ and $\mathcal{P}_{\rm ns}$, we obtain
	\begin{equation}\label{eq3.12}
		\langle\tilde{g}_{s-1},\xi_{s-1}^j\rangle>-c\|\tilde{g}_{s-1}\|^2.
	\end{equation}
	Additionally, since $G_\varepsilon(x_k)$ is a compact subset of $T_{x_k}\mathcal{M}$, there is a constant $C>0$ such that $\|\xi\|\leq C$ for all $\xi\in G_\varepsilon(x)$. Thus
	\begin{equation}\label{eq3.13}
		\|\xi_{s-1}^j+\tilde{g}_{s-1}\|\leq 2C.
	\end{equation}
	Combining (\ref{eq3.11}) with (\ref{eq3.12}) and (\ref{eq3.13}), we have
	\begin{eqnarray*}
		\|\tilde{g}_s\|^2
		& < &\|\tilde{g}_{s-1}\|^2+2\lambda c\|\tilde{g}_{s-1}\|^2-2\lambda\|\tilde{g}_{s-1}\|^2+4\lambda^2C^2\\\nonumber
		& = &\|\tilde{g}_{s-1}\|^2-2\lambda(1-c)\|\tilde{g}_{s-1}\|^2+4\lambda^2C^2.\nonumber
	\end{eqnarray*}
	Let $\lambda=\frac{1-c}{4C^2}\|\tilde{g}_{s-1}\|^2$, then it follows from $c\in(0,1)$ and $\|g_{k-1}\|\leq C$ that $\lambda\in(0,1)$. Therefore
	\begin{eqnarray*}
		\|\tilde{g}_s\|^2
		& < &\|\tilde{g}_{s-1}\|^2-2\frac{(1-c)^2}{4C^2}\|\tilde{g}_{s-1}\|^4+\frac{(1-c)^2}{4C^2}\|\tilde{g}_{s-1}\|^4\\\nonumber
		& = &\left(1-\frac{(1-c)^2}{4C^2}\|\tilde{g}_{s-1}\|^2\right)\|\tilde{g}_{s-1}\|^2.\nonumber
	\end{eqnarray*}
	Since the $\mathcal{P}_{\rm dd}$ does not terminate, it holds $C\geq\|\tilde{g}_{s-1}\|>\delta$. Thus
	$$\|\tilde{g}_s\|^2<\left(1-\left(\frac{1-c}{2C}\delta\right)^2\right)\|\tilde{g}_{s-1}\|^2.$$
	Set $\tau=1-(\frac{1-c}{2C}\delta)^2\in(0,1)$. By recursion, we obtain
	$$\|\tilde{g}_s\|^2<\tau\|\tilde{g}_{s-1}\|^2<\tau^2\|\tilde{g}_{s-2}\|^2<\cdots<\tau^{s-1}\|\tilde{g}_1\|^2\leq \tau^{s-1}C^2.$$
	This shows that $\|\tilde{g}_s\|\leq\delta$ for sufficiently large $s$, which is a contradiction.
%	\qed
\end{proof}

Lemma \ref{th3.2} show that the Procedure $\mathcal{P}_{\rm dd}$ terminates in a finite number of iterations, and then an acceptable descent direction is produced. And thus the Algorithm \ref{al1} is well defined.

In the following, we extend the conception of $(\varepsilon,\delta)$-critical from $\mathbb{R}^{n}$ (see \cite[Def. 3.2]{Gebken B}) to Riemannian manifolds. Then under the assumption of at least one objective function of problem (\ref{pro1}) is bounded below, we show that the sequence $\{x_k\}$ generated by Algorithm \ref{al1} is finite with the last element being $(\varepsilon,\delta)$-critical.

\begin{definition}\label{def4.1}
	Let $x\in\mathcal{M}$, $\varepsilon\in\left(0,\frac{1}{2}\iota(\mathcal{M})\right)$ and $\delta>0$. Then, $x$ is called $(\varepsilon,\delta)$-critical, if
	$$\min_{g\in-G_\varepsilon(x)}\|g\|\leq\delta.$$
\end{definition}

Clearly, if a point $x\in\mathcal{M}$ satisfies (\ref{eq3.1}), then it is an $(\varepsilon,\delta)$-critical point, but the converse is not necessarily true.

%It is easy to see that $(\varepsilon,\delta)$-criticality is a necessary optimality condition for Pareto optimality, but a weaker one than (\ref{eq3.1}).

\begin{theorem}\label{th4.1}
	Assume that at least one objective function of problem (\ref{pro1}) is bounded below. Let $\{x_k\}$ be the sequence generated by Algorithm \ref{al1}. Then $\{x_k\}$ is finite with the last element being $(\varepsilon,\delta)$-critical.
\end{theorem}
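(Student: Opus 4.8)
The plan is to argue by contradiction: suppose $\{x_k\}$ is infinite. Then by Step 2 of Algorithm \ref{al1}, at every iteration $k$ we have $\|g_k\| > \delta$, and by Theorem \ref{th3.3} the direction $g_k$ produced by $\mathcal{P}_{\rm dd}$ is an acceptable descent direction, i.e.
$$
f_i\!\left(R_{x_k}\!\left(\tfrac{\varepsilon}{\|g_k\|}g_k\right)\right)\leq f_i(x_k)-c\varepsilon\|g_k\|,\qquad\forall\, i\in\{1,\dots,m\}.
$$
The key point is that whichever of the two stepsize rules in Step 3 is selected, one gets a \emph{uniform} decrease of each objective. If the line search finds an $\ell$, then $t_k=\alpha^{-\ell}t_0$ satisfies $f_i(x_{k+1})\leq f_i(x_k)-\alpha^{-\ell}t_0 c\|g_k\|^2$; and because $\ell$ lies in the prescribed range, $\alpha^{-\ell}t_0\|g_k\|\geq \varepsilon$, so $\alpha^{-\ell}t_0c\|g_k\|^2\geq c\varepsilon\|g_k\| > c\varepsilon\delta$. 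If no such $\ell$ exists, then $t_k=\varepsilon/\|g_k\|$ and the acceptable-descent inequality from Theorem \ref{th3.3} gives directly $f_i(x_{k+1})\leq f_i(x_k)-c\varepsilon\|g_k\| < f_i(x_k)-c\varepsilon\delta$. In either case,
$$
f_i(x_{k+1})\leq f_i(x_k)-c\varepsilon\delta,\qquad\forall\, i\in\{1,\dots,m\},\ \forall\, k.
$$

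Next I would iterate this inequality. Fix the index $i_0$ of an objective function that is bounded below, say $f_{i_0}(x)\geq M$ for all $x\in\mathcal{M}$. Summing the decrease from iteration $0$ to $k-1$ yields $f_{i_0}(x_k)\leq f_{i_0}(x_0)-k\,c\varepsilon\delta$, which tends to $-\infty$ as $k\to\infty$, contradicting $f_{i_0}(x_k)\geq M$. Hence $\{x_k\}$ must be finite, terminating at some iterate $x_{\bar k}$ through the STOP in Step 2, which means $\|g_{\bar k}\|\leq\delta$. Finally, to conclude that $x_{\bar k}$ is $(\varepsilon,\delta)$-critical, recall that $g_{\bar k}=\hat g$ is the last element of the sequence $\{\tilde g_s\}$ in $\mathcal{P}_{\rm dd}$, so $g_{\bar k}=\mathop{\rm argmin}_{g\in-{\rm conv}W_s}\|g\|$ for some finite set $W_s\subseteq G_\varepsilon(x_{\bar k})$; since ${\rm conv}W_s\subseteq G_\varepsilon(x_{\bar k})$, we get
$$
\min_{g\in-G_\varepsilon(x_{\bar k})}\|g\|\leq \min_{g\in-{\rm conv}W_s}\|g\|=\|g_{\bar k}\|\leq\delta,
$$
which is exactly Definition \ref{def4.1}.

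The main obstacle, and the step that deserves the most care, is verifying the uniform per-iteration decrease $f_i(x_{k+1})\leq f_i(x_k)-c\varepsilon\delta$ in the line-search branch of Step 3: one must check that the admissible range $\ell\in\{0,\dots,[(\ln(t_0\|g_k\|)-\ln\varepsilon)/\ln\alpha]\}$ is exactly designed so that the accepted stepsize $t_k=\alpha^{-\ell}t_0$ always satisfies $t_k\geq \varepsilon/\|g_k\|$, whence $t_k c\|g_k\|^2\geq c\varepsilon\|g_k\|\geq c\varepsilon\delta$; and that when the search fails, the fallback stepsize $t_k=\varepsilon/\|g_k\|$ indeed guarantees descent — this is not automatic from Lemma \ref{le3.3} (which concerns $\bar g$, not the approximate direction $g_k$), but follows instead from the acceptable-descent property established in Theorem \ref{th3.3}. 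Everything else — the telescoping sum and the contradiction with boundedness below, and the passage from $\|g_k\|\leq\delta$ to $(\varepsilon,\delta)$-criticality — is routine.
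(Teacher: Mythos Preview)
Your proposal is correct and follows essentially the same argument as the paper: a contradiction assuming infinitely many iterates, the two-case analysis of Step~3 yielding a uniform per-step decrease of $c\varepsilon\delta$ (using the range of $\ell$ in the line-search branch and Theorem~\ref{th3.3} in the fallback branch), telescoping to contradict boundedness below, and finally the inclusion ${\rm conv}W_s\subseteq G_\varepsilon(x_{\bar k})$ to pass from $\|g_{\bar k}\|\leq\delta$ to $(\varepsilon,\delta)$-criticality. If anything, you are more explicit than the paper about why $\alpha^{-\ell}t_0\geq\varepsilon/\|g_k\|$ and about the distinction between Lemma~\ref{le3.3} (for $\bar g$) and Theorem~\ref{th3.3} (for the computed $g_k$).
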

\begin{proof}
	Suppose by contradiction that $\{x_k\}$ is infinite. Then, we have $\|g_k\|>\delta$ for all $k\in\mathbb{N}$. 
	If $t_k=\alpha^{-\ell}t_0$ in Step 3 of Algorithm \ref{al1}, then we have  $\alpha^{-\ell}t_0\geq\frac{\varepsilon}{\|g_k\|}$. This together with (\ref{eq3.4}) shows that, for all $i\in\{1,\cdots,m\}$,
	\begin{equation}\label{temp3}
		\begin{aligned}
			f_i(R_{x_k}(t_kg_k))-f_i(x_k)
			& =  f_i(R_{x_k}(\alpha^{-\ell}t_0g_k))-f_i(x_k)\\
			& \leq  -\alpha^{-\ell}t_0c\|g_k\|^2\\
			& \leq  -c\varepsilon\|g_k\|\\
			& < -c\varepsilon\delta.
		\end{aligned}
	\end{equation}
	Conversely, if $t_k=\frac{\varepsilon}{\|g_k\|}$,
	we have $\frac{\varepsilon}{\|g_k\|}\geq\alpha^{-\ell}t_0$, 
	then from Lemma \ref{th3.3}, the last inequality in \eqref{temp3} can be also obtained. In summary, we can conclude that $\{f_i(x_k)\}$ is unbounded below for each $i\in\{1,\cdots,m\}$, which is a contradiction.Thus the sequence $\{x_k\}$ is finite.
	
	Let $x_*$ and $g_*$ be the last elements of $\{x_k\}$ and $\{g_k\}$, respectively. Since the algorithm stopped, we must have $\|g_*\|\leq\delta$ by Step 2 of Algorithm \ref{al1}. On the other hand, by the construction of the procedures $\mathcal{P}_{\rm dd}$ and $\mathcal{P}_{\rm ns}$, there is a set $W_*\subseteq G_\varepsilon(x_*)$ such that
	$g_*={\rm argmin}_{g\in-{\rm conv}W_*}\|g\|$. Thus
	$$\min_{g\in-G_\varepsilon(x_*)}\|g\|\leq\min_{g\in-{\rm conv}(W_*)}\|g\|=\|g_*\|\leq\delta,$$
	which completes the proof.
%	\qed
\end{proof}

\section{Numerical results}\label{sec5}
In this section, we will present numerical results of several examples for our method. Most of the objective functions of these examples are of the classic optimization problems on Riemannian manifolds. 
{%\color{magenta} 
	It seems that our method is not suitable to be compared with the existing Riemannian nonsmooth multiobjective optimization methods \cite{Bento4} and \cite{Bento3}.
In fact, the subgradient method \cite{Bento4} 
assumes that the objective vector function is convex on $\mathcal{M}$ and requires to calculate the geodesics. However, although the geodesics of the examples we tested below have closed forms, the subgradient method may still not be applicable since these examples may not be convex on the manifolds. Note that it is usually not easy to verify whether a function on a manifold is convex, but we can verify that Example \ref{example1} is nonconvex when the sphere $S^2$ inherits the Euclidean metric of $\mathbb{R}^3$.}
%to calculate the geodesic (being a solution of a second-order ordinary differential equation), which is in general a difficult task. 
In addition, the proximal point method \cite{Bento3} requires the constraint manifold to be a Hadamard manifold, which 
clearly limits its applications, since the Stiefel manifold used in our test below is not a Hadamard manifold. Based on the observations above,
in what follows we only report the numerical results produced by Algorithm \ref{al1} without comparisons.

\begin{example}\label{example1}
	We first consider a simple problem. Let $m=2$ in problem (\ref{pro1}), and set
	$\mathcal{M}=S^2$ which is the Euclidean unit sphere in $\mathbb{R}^3$, 
	$f_1(x)=\max(0.5x_1+x_3,0.3x_2+1.5x_3)$ and  $f_2(x)=|x_1-0.5|+x_2+x_3$. 	
\end{example}

\begin{example}\label{example2} 
	Recently, many researchers are interested in the geometric median on a Riemannian manifold $\mathcal{M}$ (see\cite{Hoseini,Fletcher}). Let $y_1,\cdots,y_q\in\mathcal{M}$ be some given points,  $w=(w_1,\cdots,w_q)^T\in\mathbb{R}^q_+$ and $\sum_{j=1}^{q}w_j=1$ be the corresponding weights. This problem is to minimize $\sum_{j=1}^{q}w_j{\rm dist}(x,y_j)$ on $\mathcal{M}$. Now, we consider the multiobjective setting and set $\mathcal{M}=S^{p-1}$ which is the Euclidean unit sphere in $\mathbb{R}^p$ and 
	$f_i(x)=\sum_{j=1}^{q^i}w_j^i{\rm dist}(x,y_j^i)$, where $y_1^i,\cdots,y_{q^i}^i\in\mathcal{M}$, $w^i\in\mathbb{R}^{q^i}_+$ with $\sum_{j=1}^{q^i}w_j^i=1$ for all $i\in \{1,\cdots,m\}$.
\end{example}

%	The original single objective optimization problem is to compute the sparsest vector in a linear space $W$ which is an $p$-dimensional subspace of $\mathbb{R}^n$ (see\cite{Qu Q,Hoseini}):
%\begin{equation*}\label{ex2.1}
%	\min \|Qx\|_1,\,\,{\rm such\,\,that}\,\,x\in S^{p-1},
%\end{equation*}	
%where $Q\in \mathbb{R}^{n\times p}$ is a matrix whose columns consist of an orthonormal basis of $W$, and $S^{p-1}$ is the Euclidean unit sphere in $\mathbb{R}^p$.
%Now, we consider the multiobjective setting of the above problem:
%\begin{equation}\label{ex2}
%\min F(x):= %\min
%    \left (
%    \begin{array}{l}
	%   f_1(x)\\
	%   \,\,\,\,\,\,\vdots\\
	%   f_m(x)
	%   \end{array}
%   \right ),\,\,
%     {\rm such\,\,that}\,\,x\in S^{p-1},
%\end{equation}
%where $f_i(x)=\|Q_ix\|_1$ and $Q_i\in \mathbb{R}^{n\times p}$ denotes a matrix
%whose columns consist of an orthonormal basis of $p$-dimensional linear subspace $W_i$ of $\mathbb{R}^n$ for all $i\in \{1,\cdots,m\}$.

\begin{example}\label{example3}
	Eigenvalue problems are ubiquitous in scientific research and practical applications, such as physical science and engineering design, etc. 
	Let $A$ be a real symmetric matrix, the eigenvalue problem can be transformed into
	a Rayleigh quotient problem whose objective function is $\frac{x^TAx}{x^Tx}$. This problem can be further viewed as an optimization problem on a sphere to minimize $x^TAx$; see \cite{P. A. Absil}.
	Also, we consider the multiobjective setting, set $\mathcal{M}=S^{p-1}$ and 
	$f_i(x)=x^TA_ix$, where $A_i$ is a real symmetric matrix for each $i\in\{1,\cdots,m\}$.
\end{example}

\begin{example}\label{example4}
	The $l_1$-regularized least squares problem (named as Lasso) was proposed in \cite{Tibshirani R}, which has been used heavily in machine learning and basis pursuit denoising, etc. 
	The cost function of this problem is $\frac{1}{2}\|Ax-b\|^2+\lambda\|x\|_1$,
	where $A\in \mathbb{R}^{n\times p}$, $b\in\mathbb{R}^n$ and $\lambda>0$. Here, we restrict $x$ to the unit sphere $S^{p-1}$ and consider the objective functions $f_i(x)=\frac{1}{2}\|A_ix-b_i\|^2+\lambda_i\|x\|_1$, where $A_i\in\mathbb{R}^{n\times p}$, $b_i\in\mathbb{R}^n$ and $\lambda_i>0$ for all $i\in\{1,\cdots,m\}$.
\end{example}

On sphere $S^{p-1}$, the Riemannian metric is inherited from the ambient space $\mathbb{R}^p$, and the Riemannian distance ${\rm dist}(x,y)={\rm arccos}\langle x,y\rangle$. Moreover, for all instances, the exponential map and the parallel transport are employed as a retraction and vector transport, respectively. More precisely, the retraction is as follows
$$R_x(\xi):={\rm exp}_x(\xi)={\rm cos}(\|\xi\|)x+{\rm sin}(\|\xi\|)\frac{\xi}{\|\xi\|},$$
\noindent where $\xi\in T_xS^{p-1}$. The vector transport associated with $R$ is given by
$$\mathcal{T}_{x\rightarrow\gamma(t)}:=\left(I_p+({\rm cos}(\|\dot{\gamma}(0)t\|)-1)uu^T-{\rm sin}(\|\dot{\gamma}(0)t\|xu^T\right)\xi,$$
\noindent where $\gamma$ is a geodesic on $S^{p-1}$ with $\gamma(0)=x$ and $u=\frac{\dot{\gamma}(0)}{\|\dot{\gamma}(0)\|}$. Note that $\mathcal{T}_{x\leftarrow y}(\xi_y)=\mathcal{T}_{y\rightarrow\sigma(1)}$, where $\sigma(t)={\exp_y(tv)}$ denotes the geodesic connecting $y$ to $x$, and $v$ can be computed by
$v={\rm dist}(x,y)\frac{(I-xx^T)(y-x)}{\|(I-xx^T)(y-x)\|}.$
Therefore
$$\mathcal{T}_{x\leftarrow y}(\xi_y)=\left(I_p+({\rm cos}(\|v\|)-1)uu^T-{\rm sin}(\|v\|)yu^T\right)\xi_y\,\ {\rm with} \,\ u=v/\|v\|,$$
which is well defined for all $y\neq \pm x$; see \cite{Hosseini1}.

In order to obtain the element with the smallest norm in the set $W_s$ at Step 1 of Procedure $\mathcal{P}_{\rm dd}$, we require to choose the basis of the tangent space $T_{x^k}\mathcal{M}$ for all $x^k$. Particularly, Huang \cite{Huang W1} provided a way to select the orthogonal basis of $T_{x^k}\mathcal{M}$ for Stiefel manifold. Then we assume that such basis are given by $\{E_{k,1},\cdots,E_{k,d}\}$. Let $W_s=\{\xi_1,\cdots,\xi_r\}\subset G_\varepsilon(x^k)$ and $\xi_i=\sum_{j=1}^{d}\xi_i^jE_{k,j}$, and we solve the following quadratic programming:
$$\min_{\lambda\in\mathbb{R}^r}~~\lambda^T\widehat{W}^T_s\widehat{W}_s\lambda\,\, ~~{\rm such\,\,that}~~\,\,\sum_{j=1}^{r}\lambda_j=1\,\,{\rm and}\,\,\lambda\geq 0,$$
where $\widehat{W}_s=[\hat{\xi}_1,\cdots,\hat{\xi}_r]$ is a matrix with {%\color{red}
	 $\hat{\xi}_i=(\xi_i^1,\cdots,\xi_i^d)^T\in\mathbb{R}^d$. Let $\bar{\lambda}$ be the solution of above problem. Therefore, $\tilde{g}_s$ from Step 1 in $\mathcal{P}_{\rm dd}$ is then obtained via $\tilde{g}_s = -\sum_{j=1}^{r}\bar{\lambda}_j\xi_j$.}

All tests are implemented in MATLAB R2022b using IEEE double precision arithmetic and run on a laptop equipped with Intel Core i7, CPU 2.60 GHz and 16 GB of RAM. The quadratic programming solver $\mathbf{quadprog.m}$ in the MATLAB optimization toolbox is used to solve the convex quadratic problem in Step 1 of the Procedure $\mathcal{P}_{\rm dd}$. For all examples, we set the algorithm parameters as follows: $\varepsilon=10^{-4}$, $\delta=10^{-3}$, $c=0.25$, $\alpha=2$, $t_0=1$.

The numerical results are shown in Figs. \ref{fig.1}--\ref{fig.4}. In particular, each picture shows the value space generated by our algorithm for 100 random starting points. In Fig. \ref{fig.21}, we set $p=100$, $m=2$, $q^1=6$, $w^1=(0.1,0.1,0.1,0.2,0.2,0.3)^T$, $q^2=4$, $w^2=(0.1,0.2,0.3,0.4)^T$ and $y_j^i$ is randomly generated for $i=1,2$, $j=1,\cdots,q^i$. 
In Fig. \ref{fig.22}, we set $p=100$, $m=3$, $q^1=6$, $w^1=(0.1,0.1,0.1,0.2,0.2,0.3)^T$, $q^2=4$, $w^2=(0.1,0.2,0.3,0.4)^T$, $q^3=5$, {%\color{red}
	 $w^3=(0.1,0.1,0.2,0.3,0.3)^T$}, and $y_j^i$ is randomly generated for $i=1,2,3$, $j=1,\cdots,q^i$. {%\color{red} 
	 In Fig. \ref{fig.31}, we set $p=50$, $m=2$ and randomly generate $A_i$ for $i=1,2$. In Fig. \ref{fig.31}, we set $p=50$ and $m=3$ and $A_i$ are randomly generated for $i=1,2,3$. In Fig. \ref{fig.41}, we set $n=50$, $p=10$, $m=2$, then we randomly generate $A_i$, $b_i$ and $\lambda_i>0$ for $i=1,2$. In Fig. \ref{fig.42}, we set $n=50$, $p=10$, $m=3$, then we randomly generate $A_i$, $b_i$ and $\lambda_i>0$ for $i=1,2,3$.}

%\newpage
\begin{figure}[ht]
	\centering
	\includegraphics[width=2.8in, keepaspectratio]{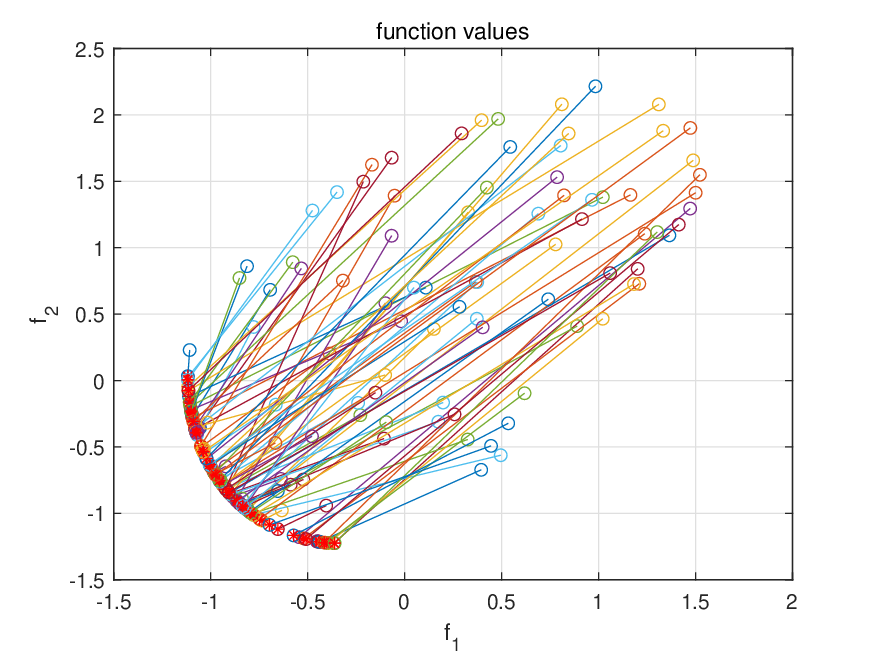}
	\caption{Numerical results for Example \ref{example1}.}
	\label{fig.1}
\end{figure}

\begin{figure}[ht]
	\centering
	\subfloat[$m=2$\label{fig.21}]{
		\includegraphics[scale=0.45]{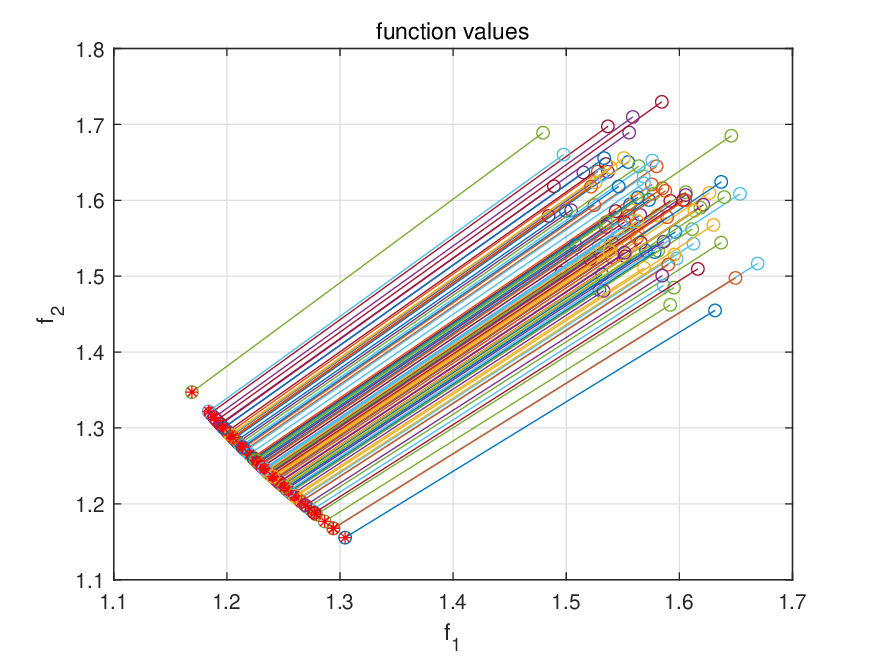}}
	\subfloat[$m=3$\label{fig.22}]{
		\includegraphics[scale=0.45]{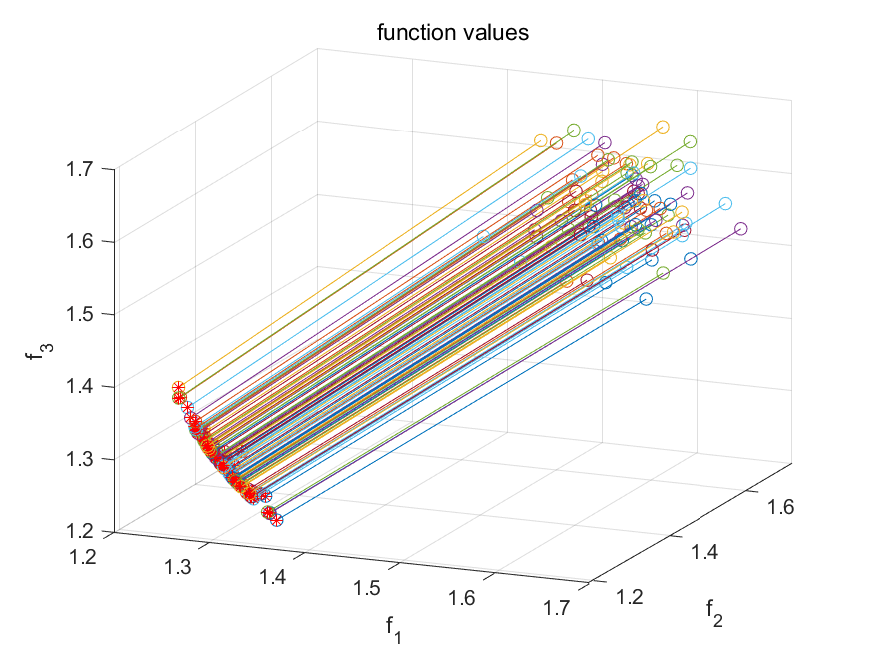}}
		\caption{Numerical results for Example \ref{example2}.}
		\label{fig.2}
\end{figure}

\begin{figure}[ht]
	\centering
	\subfloat[$m=2$\label{fig.31}]{
		\includegraphics[scale=0.45]{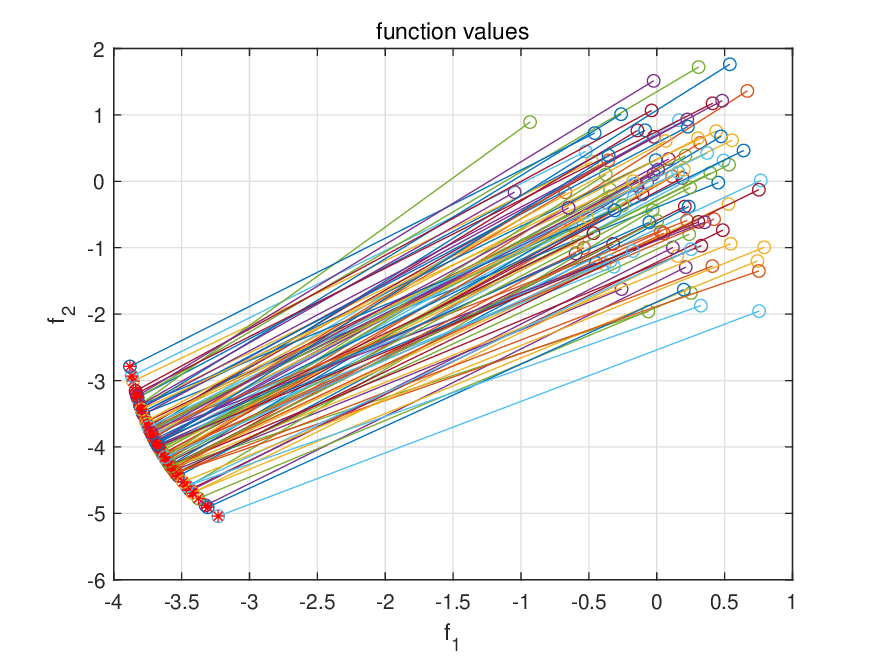}}
	\subfloat[$m=3$\label{fig.32}]{
		\includegraphics[scale=0.45]{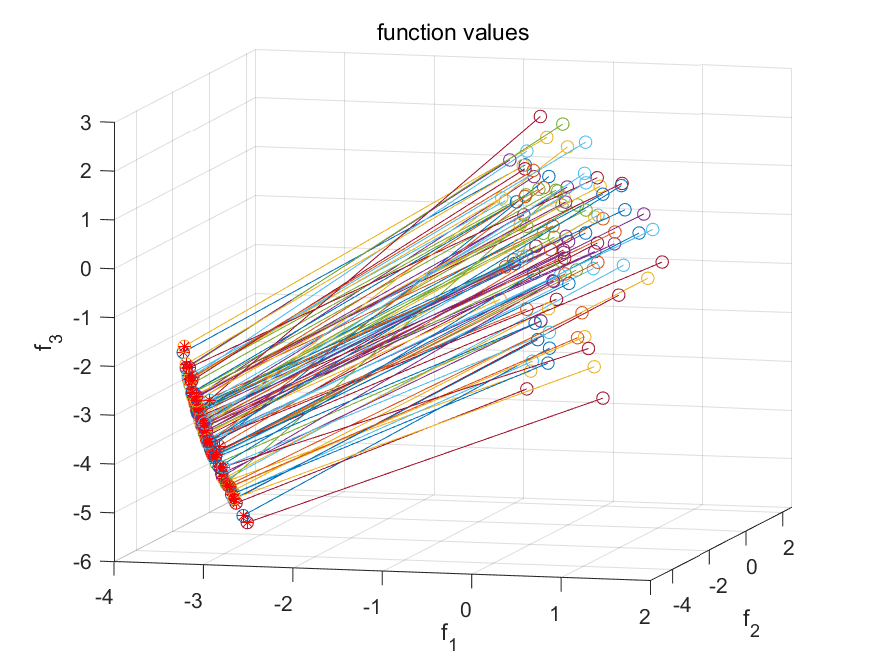}}
	\caption{Numerical results for Example \ref{example3}.}
	\label{fig.3}
\end{figure}

\begin{figure} [ht]
	\centering
	\subfloat[$m=2$\label{fig.41}]{
		\includegraphics[scale=0.45]{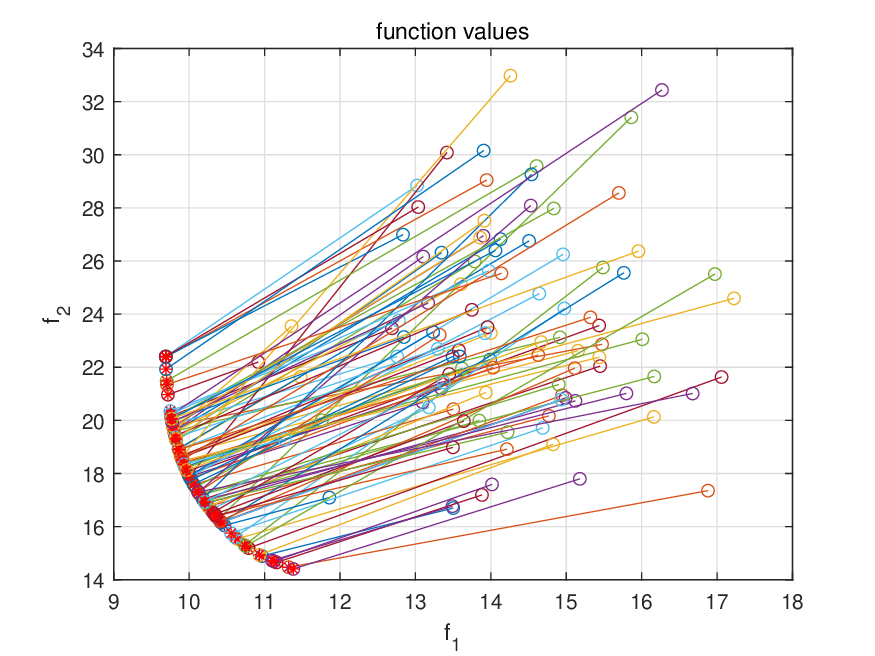}}
	\subfloat[$m=3$\label{fig.42}]{
		\includegraphics[scale=0.45]{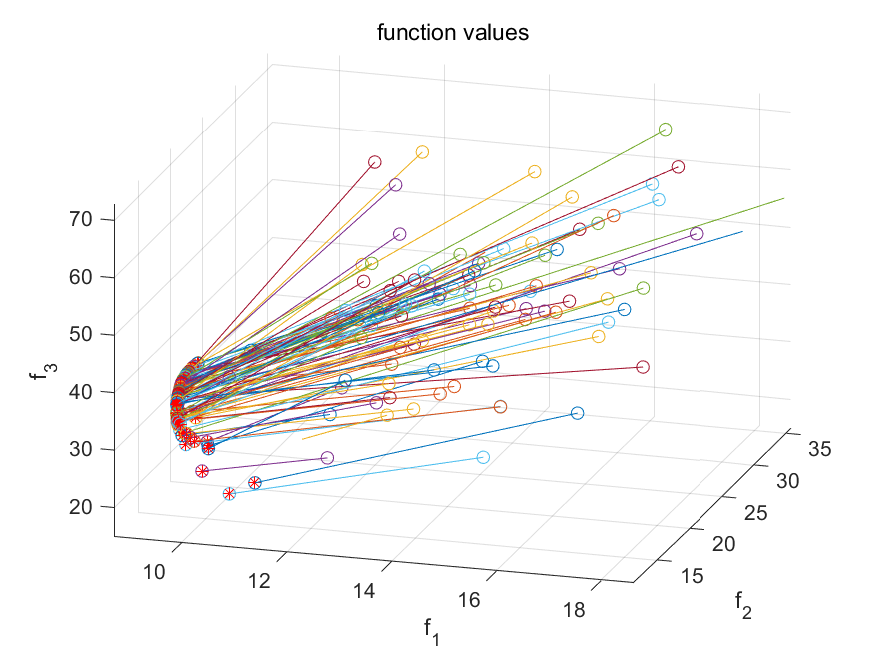}}
	\caption{Numerical results for Example \ref{example4}.}
	\label{fig.4}
\end{figure}

In Figs. \ref{fig.1}--\ref{fig.4}, the hollow points indicate the objective vector values of the initial points, and these marked with red stars are the objective vector values of the final points (namely, the $(\varepsilon,\delta)$-critical points). 
From these figures, we see that nearly all of the final points generated by our method are (approximate) Pareto optimal points for the corresponding examples. Thus, we can obtain the approximations of Pareto sets for the above examples by Algorithm \ref{al1} when some reasonable number of starting points are given. 
Furthermore, in Table \ref{ta1}, we list the average number of iterations (Iter), average number of calls to the objective functions (Nf) and average number of calls to the subgradient of objective functions (Ng) for $100$ random starting points for all examples. In summary, the preliminary numerical results show that our method is effective and promising.
\begin{table}[h]
	\tbl{}
	{\begin{tabular}{lccccccc} \toprule
		%	& \multicolumn{2}{l}{Type} \\ \cmidrule{2-7}
			Example & \ref{example1} & \ref{example2} ($m=2$)&\ref{example2} ($m=3$) & \ref{example3} ($m=2$)&\ref{example3} ($m=3$) & \ref{example4} ($m=2$)& \ref{example4} ($m=3$) \\ \midrule
			Iter & 4 & 12&13 & 23 &49 & 44 &53 \\
			Nf & 35 & 88&149 & 361 &1168& 975 &1724 \\
			Ng & 9 & 24&40 & 47 &147 & 130 &221 \\ 
         \bottomrule
	\end{tabular}}
%	\tabnote{\textsuperscript{a}This footnote shows how to include
	%	footnotes to a table if required.}
	\label{ta1}
\end{table}

\section{Conclusion and discussion}

 In this paper, we have presented an implementable descent method for multiobjective optimization problems with locally Lipschitz components on complete Riemannian manifolds. Our setting is much more general than certain convexities assumed in the existing works. To avoid computing the Riemannian $\varepsilon$-subdifferential of the objective vector function, a convex hull of some Riemannian $\varepsilon$-subgradients is constructed to obtain an acceptable descent direction, which greatly reduces the computational complexity. Furthermore, we extend a necessary condition of the Pareto optimal points to the Riemannian setting. Finite convergence of the proposed algorithm is obtained under the assumption that at least one objective function is bounded below and the employed retraction and vector transport satisfy certain conditions. Finally, some preliminary numerical results illustrate the effectiveness of our method.
 
 {%\color{red}  
 In closing, we discuss a way that may enhance the convergence results of our method. 
In Theorem 4.4, we have proved that, for fixed parameters $\varepsilon>0$ and $\delta>0$,  an $(\varepsilon,\delta)$-critical point can be produced by Algorithm 1 after finitely many iterations.
In fact, in the Euclidean setting, based on the idea of \cite{Mahdavi-Amiri,Burkegs} for single objective optimization, 
 the methods in \cite{Gebken B,Gebkenphd} can obtain actual Pareto critical points (i.e., $(0,0)$-critical points) by dynamically reducing $(\varepsilon,\delta)$ to $(0,0)$. 
 As previously mentioned, our method can be seen as a Riemannian version of the method in \cite{Gebken B}. 
  Thus, we believe that some similar stronger convergence results may be obtained by using the strategy that the parameters $\varepsilon$ and  $\delta$ are decreased dynamically to zero, which is a worthy direction for future work.

}

%As a future work, similar to the idea in \cite{Hosseini2}, we may further extend the norm in (\ref{eq3.5}) by the $P$-norm and choose the direction as $P\tilde{g}$, where $P$ is a positive definite matrix. 

\section*{Acknowledgements}
The authors are sincerely grateful to the Editor and two anonymous referees for their valuable comments and suggestions that improve the original version of this paper significantly.

\section*{Disclosure statement}
No potential conflict of interest was reported by the author(s).
\section*{Funding}
This work was supported by the National Natural Science Foundation of China (12271113, 12171106, 12061013) and Guangxi Natural Science Foundation (2020GXNSFDA238017).

\section*{Notes on contributors}
Chunming Tang (1979) is a Professor in College of Mathematics and Information Science, Guangxi University. He received his bachelor’s degree at Guangxi University (2002) and PhD’s degree in Shanghai University (2008). His research interest lies in nonlinear programming, nonsmooth optimization, and Riemannian optimization.
\vspace{0.3cm}

\noindent Hao He (1997) is a master's student in College of Mathematics and Information Science, Guangxi University. He received his bachelor’s degree at Hefei University of Technology (2020).
His research interest is Riemannian optimization.
\vspace{0.3cm}

\noindent Jinbao Jian (1964) is a Professor in College of Mathematics and Physics, Guangxi Minzu University. He received his bachelor’s degree at Guangxi University (1986) and PhD’s degree in Xi’an Jiaotong University (2000). His research interest lies in optimization theory and its applications.
\vspace{0.3cm}

\noindent Miantao Chao (1981) is an associate Professor in College of Mathematics and Information Science, Guangxi University. He received his bachelor’s degree at Liaocheng University (2004) and PhD’s degree in Beijing University of Technology (2015). His research interest lies in optimization theory and methods.


\begin{thebibliography}{99}
	
\bibitem{Abraham} A. Abraham, L. Jain, \emph{Evolutionary Multiobjective Optimization}, Springer, 2005.

\bibitem{P. A. Absil} P. A. Absil, R. Mahony, R. Sepulchre, \emph{Optimization Algorithms on Matrix Manifolds}, Princeton University Press, 2008.

\bibitem{Attouch H} H. Attouch, G. Garrigos, X. Goudou, \emph{A dynamic gradient approach to Pareto optimization with nonsmooth convex objective functions}, J. Math. Anal. Appl. 422 (2015), pp. 741-771.

\bibitem{Uttarayan Bagchi} U. Bagchi, \emph{Simultaneous minimization of mean and variation of flow time and waiting time in single machine systems}, Oper. Res. 37 (1989), pp. 118-125.

\bibitem{H. Bonnel} H. Bonnel, A. N. Iusem, B. F. Svaiter, \emph{Proximal methods in vector optimization}, SIAM J. Optim. 15 (2005), pp. 953-970.

\bibitem{Burkegs} J. V. Burke, A. S. Lewis, M. L. Overton, \emph{A robust gradient sampling algorithm for nonsmooth, nonconvex optimization}, SIAM J. Optim. 15 (2005), pp. 751-779.

\bibitem{Bento5} G. C. Bento, J. G. Melo, \emph{Subgradient method for convex feasibility on Riemannian manifolds}, J. Optim. Theory Appl. 152 (2012), pp. 773-785.

\bibitem{Bentofo} G. C. Bento, O. P. Ferreira, P. R. Oliveira, \emph{Local convergence of the proximal point method for a special class of nonconvex functions on Hadamard manifolds}, 
Nonlinear Anal. 73(2010), pp. 564-572.

\bibitem{Bento1} G. C. Bento, O. P. Ferreira, P. R. Oliveira, \emph{Unconstrained steepest descent method for multicriteria optimization on Riemannian manifolds}, J. Optim. Theory Appl. 154 (2012), 88-107.

\bibitem{Bento4} G. C. Bento, J. X. Cruz Neto, \emph{A subgradient method for multiobjective optimization on Riemannian manifolds}, J. Optim. Theory Appl. 159 (2013), pp. 125-137.

\bibitem{Bento2} G. C. Bento, J. X. Cruz Neto, P. S. M. Santos, \emph{An inexact steepest descent method for multicriteria optimization on Riemannian manifolds}, J. Optim. Theory Appl. 159 (2013), pp. 108-124.

\bibitem{Bento6} G. C. Bento, J. X. Cruz Neto, G. L${\rm \acute{o}}$pez, A. Soubeyran, J.C.O. Souza, \emph{The proximal point method for locally Lipschitz functions in multiobjective optimization with application to the compromise problem}, SIAM J. Optim. 28 (2018), pp. 1104-1120.

\bibitem{Bento3} G. C. Bento, J. X. Cruz Neto, L. V. Meireles, \emph{Proximal point method for locally Lipschitz functions in multiobjective optimization of Hadamard manifolds}, J. Optim. Theory Appl. 179 (2018), pp. 37-52.

\bibitem{Boumal} N. Boumal, \emph{An Introduction to Optimization on Smooth Manifolds}, Cambridge University Press, 2023.

\bibitem{Clarke.F} F. Clarke, \emph{Optimization and Nonsmooth Analysis}, Society for Industrial and Applied Mathematics, 1983.

\bibitem{Deb} K. Deb, \emph{Multi-objective optimisation using evolutionary algorithms: an introduction}. Springer, 2011.

\bibitem{Ehrgott} M. Ehrgott, \emph{Multicriteria Optimization}, Springer, New York, 2005.

\bibitem{Engau} A. Engau, M. M. Wiecek, \emph{2D decision-making for multicriteria design optimization}, Structural and Multidisciplinary Optimization, 34 (2007), pp. 301-315.

\bibitem{El} M. El Moudden, A. El Mouatasim, \emph{Accelerated diagonal steepest descent method for unconstrained multiobjective optimization}, J. Optim. Theory Appl. 188(2021), pp. 220-242.

\bibitem{Eslami N} N. Eslami, B. Najafi, S. M. Vaezpour, \emph{A trust region method for solving multicriteria optimization problems on Riemannian manifolds}, J. Optim. Theory Appl. 196 (2023), 212-239.

\bibitem{Fliege Steepest} J. Fliege, B. F. Svaiter, \emph{Steepest descent methods for multicriteria optimization}, Math. Methods Oper. Res. 51 (2000), pp. 479-494.

\bibitem{Fliege1} J. Fliege, \emph{OLAF-a general modeling system to evaluate and optimize the location of an air polluting facility}, OR Spektrum, 23 (2001), pp. 117–136.

\bibitem{Fliege Newton} J. Fliege, L. G. Drummond, B. F. Svaiter, \emph{Newton's method for multiobjective optimization}. SIAM J. Optim. 20 (2009), pp. 602-626.

\bibitem{Fletcher} P. T. Fletcher, S. Venkatasubramanian, S. Joshi, \emph{The geometric median on Riemannian manifolds with application to robust atlas estimation}, NeuroImage 45 (2009), pp. S143-S152.

\bibitem{Geoffrion} A. M. Geoffrion, \emph{Proper efficiency and the theory of vector maximization}, J. Math. Anal. Appl. 22 (1968), pp. 618-630.

\bibitem{M. Gravel} M. Gravel, J. M. Martel, R. Madeau, W. Price, R. Tremblay, \emph{A multicriterion view of optimal ressource allocation in job-shop production}, Eur. J. Oper. Res. 61 (1992), pp. 230-244.

\bibitem{Gebken B} B. Gebken, S. Peitz, \emph{An efficient descent method for locally Lipschitz multiobjective optimization problems}, J. Optim. Theory Appl. 188 (2021), pp. 696-723.

\bibitem{Gebkenphd} B. Gebken, \emph{Computation and analysis of Pareto critical sets in smooth and nonsmooth multiobjective optimization}, PhD thesis, Paderborn University, 2022.

\bibitem{Hosseini3} S. Hosseini, M. R. Pouryayevali, \emph{Generalized gradients and characterization of epi-Lipschitz sets in Riemannian manifolds}, Nonlinear Anal. 74 (2011), pp. 3884-3895.

\bibitem{Huang W1} W. Huang, \emph{Optimization Algorithms on Riemannian Manifolds with Applications}, Ph.D. thesis, Department of Mathematics, Florida State University, Tallahassee, FL, 2013.

\bibitem{Huang W} W. Huang, K. A. Gallivan, P. A. Absil, \emph{A Broyden class of quasi-Newton methods for Riemannian optimization}, SIAM J. Optim. 25 (2015), pp. 1660-1685.

\bibitem{Hosseini1} S. Hosseini, A. Uschmajew, \emph{A Riemannian gradient sampling algorithm for nonsmooth optimization on manifolds}, SIAM J. Optim. 27 (2017), pp. 173-189.

\bibitem{Hosseini2} S. Hosseini, W. Huang, R. Yousefpour, \emph{Line search algorithms for locally Lipschitz functions on Riemannian manifolds}, SIAM J. Optim. 28 (2018), pp. 596-619.

\bibitem{Hu} J. Hu, X. Liu, Z. W. Wen, Y. X. Yuan, \emph{A brief introduction to manifold optimization}, J. Oper. Res. Soc. China, 8 (2020), pp. 199-248.

\bibitem{Hoseini} N. Hoseini Monjezi, S. Nobakhtian, M.R. Pouryayevali, \emph{A proximal bundle algorithm for nonsmooth optimization on Riemannian manifolds}, IMA J. Numer. Anal. 43 (2023), pp. 293-325.

\bibitem{Leschine} T. M. Leschine, H. Wallenius, W. A. Verdini, \emph{Interactive multiobjective analysis and assimilative capacity-based ocean disposal decisions}, Eur. J. Oper. Res. 56 (1992), pp. 278-289.

\bibitem{Mahdavi-Amiri} N. Mahdavi-Amiri, R. Yousefpour, \emph{An effective nonsmooth optimization algorithm for locally Lipschitz functions}, J. Optim. Theory Appl. 155 (2012), pp. 180-195.

\bibitem{Makela2} M. M. M{\"a}kel{\"a}, V. P. Eronen, N. Karmitsa, \emph{On Nonsmooth Multiobjective Optimality Conditions with Generalized Convexities}, Springer, New York, 2014, pp. 333–357. 

\bibitem{Makela1} M. M. M{\"a}kel{\"a}, N. Karmitsa, O. Wilppu, \emph{Proximal bundle method for nonsmooth and nonconvex multiobjective optimization}. Mathematical modeling and optimization of complex structures, Springer, 2016, pp. 191-204. 

\bibitem{Povalej Ž} \v{Z}. Povalej, \emph{Quasi-Newton’s method for multiobjective optimization}, J. Comput. Appl. Math. 255 (2014), pp. 765-777.


%\bibitem{Qu Q} Qu, Q., Sun, J., Wright, J.: Finding a sparse vector in a subspace: linear sparsity using alternating directions. Advances in Neural Information Processing Systems. $\mathbf{27}$, 3401–3409 (2014)

\bibitem{Sawaragi} Y. Sawaragi, H. Nakayama, T. Tanino, \emph{Theory of multiobjective optimization}, Elsevier, 1985.

\bibitem{Sato} H. Sato, \emph{Riemannian Optimization and Its Applications}, Springer Nature, Switzerland, 2021.

\bibitem{Tibshirani R} R. Tibshirani, \emph{Regression shrinkage and selection via the lasso}, J. R. Stat. Soc. Ser. B. 58 (1996), pp. 267-288.


\bibitem{Zhu X} X. Zhu, \emph{A Riemannian conjugate gradient method for optimization on the Stiefel manifold}, Comput. 
Optim. Appl. 67 (2017), pp. 73-110.


%%%%%%%%%%%%%%%%%%%%%%%%%%%

\end{thebibliography}
\end{document}